\numberwithin{equation}{section}
\theoremstyle{plain}
\newtheorem{theorem}{Theorem}[section]
\newtheorem{lemma}[theorem]{Lemma}
\theoremstyle{definition}
\theoremstyle{remark}
\newtheorem{case[theorem]}{Case}
\title[\parbox{14cm}{\centering{Restriction estimates for paraboloids \hspace{1in}}} \quad]{Finite field restriction estimates for the paraboloid in high even dimensions}
\author{ Alex Iosevich, Doowon Koh, and Mark Lewko}
\address{Department of Mathematics\\
 University of Rochester \\
 Rochester, NY USA}
\email{iosevich@math.rochester.edu}
\address{Department of Mathematics\\
Chungbuk National University \\
Cheongju, Chungbuk 28644 Korea}
\email{koh131@chungbuk.ac.kr}
\address{New York, NY USA}
\email{mlewko@gmail.com}
\thanks{Key words and phrases: additive energy, exponential sums, finite field, Fourier analysis,  paraboloid.\\
The second listed author was supported by Basic Science Research Program through the National Research Foundation of Korea(NRF) funded by the Ministry of Education, Science and Technology(NRF-2015R1A1A1A05001374)
}
\subjclass[2010]{ 42B05}
\begin{document}

\begin{abstract} We prove that the finite field Fourier extension operator for the paraboloid is bounded from $L^2\to L^r$ for $r\geq \frac{2d+4}{d}$ in even dimensions $d\ge 8$, which is the optimal $L^2$ estimate. For $d=6$ we obtain the optimal range $r> \frac{2d+4}{d}=8/3$, apart from the endpoint. For $d=4$ we improve the prior range of $r>16/5=3.2$ to $r\geq 28/9=3.111\ldots$, compared to the conjectured range of $r\geq3$. The key new ingredient is improved additive energy estimates for subsets of the paraboloid. 
\end{abstract}

\maketitle
\section{Introduction} 
The Fourier restriction conjecture is one of central open problems in Euclidean harmonic analysis and lies at the crossroads of geometric measure theory, analytic number theory, arithmetic combinatorics, and dispersive PDEs, among other fields. Given a surface $V$ in $R^d$ with surface measure $d\sigma$, the conjecture seeks exponent pairs $(p,r)$ such that one has the inequality 
$$ \|(fd\sigma)^\vee\|_{L^r(R^d)} \le C \|f\|_{L^p(V,\,d\sigma)}$$
where $(\cdot)^\vee$ denotes the inverse Fourier transform. We refer the reader to \cite{Bo91, BCT06, Gu15, Gu16, St78,Ta03, Ta04}, for a more detailed discussion of this conjecture, its connections to other areas of mathematics, and partial progress.

In 2002, Mockenhaupt and Tao \cite{MT04} initiated the study of analogous Fourier restriction problems in the finite field setting. Their work primarily focused on the case of the paraboloid in low dimensions. Subsequently a number of authors have improved their results, extended and generalized their results to higher dimensions and more general surfaces/algebraic varieties. See \cite{IK08, IK09, IK10, Ko16, Le13, Le14, LL10, SZ16}.

The $d$-dimensional finite field paraboloid is defined as $P=\{(\underline{x},\underline{x}\cdot \underline{x}): \underline{x} \in F^{d-1}\}$. The Fourier extension operator for $P$ maps functions on $P$ to functions on $F^d$ and is defined as follows
$$ (fd\sigma)^{\vee}(x) := \frac{1}{|P|} \sum_{\xi \in P} f(\xi)e\left(x \cdot \xi \right).$$
The finite field restriction problem for the paraboloid seeks exponents pairs $(p,r)$ such that one has the inequality
$$ \|(fd\sigma)^\vee\|_{L^r(F^d)} \lesssim \|f\|_{L^p(P,\,d\sigma)}$$
with a constant independent of the size of the finite field. 
Here the $L^r(F^d)$ norm is with respect to the counting measure on $F^d$ and the $L^p(P,\,d\sigma)$ norm is with respect to the normalized counting measure on $P$ which assigns a mass of $|P|^{-1}$ to each point in $P$. We denote the best constant such that the above inequality holds as $R^{*}(p\to r)$ which allows us to abbreviate the claimed inequality for an exponent pair $(p,r)$ as $R^{*}(p\to r) \lesssim 1$. 
Obstructions to inequalities $R^{*}(p\to r) \lesssim 1$ arise when the surface $P\subset F^d$ contains large affine subspaces. Indeed it was observed in \cite{MT04} that the presence of an affine subspace $H$ of dimension $k$ (that is $|H|=|F|^k$) implies the following necessary conditions for $R^{*}(p\to r) \lesssim 1$:
\begin{equation}\label{Necessary2}
r\geq \frac{2d}{d-1}  \quad \mbox{and} \quad r\geq\frac{p(d-k)}{(p-1)(d-1-k)}.\end{equation}
Informally speaking, the restriction conjecture for the paraboloid states that the necessary conditions that arise from subspaces in this way are the only obstructions to inequalities of this form.  The precise statement of the conjecture is complicated by the fact that the size of the largest affine subspace depends on the arithmetic of the field, in particular if there exists an element $i \in F$ such that $i^2 = -1$. For instance in the $d=3$ case, one can see that $P:=\{(x_1,x_2, x_1^2+ x_2^2) : x_1,x_2 \in F\}$ contains a line of the form $(t,it,0)$ when there exists an $i \in F$ such that $i^2=-1$. The size of the maximal affine subspaces contained within a $d$-dimensional paraboloid is well-understood, see \cite{Ko16, Le14}, and gives rise to the following end-point restriction conjectures:
\begin{enumerate} 
\item if $d\ge 2$ is even, then  $ R^*\left(\frac{2d^2}{d^2-d+2},~~ \frac{2d}{d-1}\right)\lesssim 1$;
\item if $d=4\ell-1$ for $\ell\in \mathbb N$, and $ -1\in  F$ is not a square number, then
$ R^*\left(\frac{2d^2+2d}{d^2+3},~~ \frac{2d}{d-1}\right)\lesssim 1$;
\item  If $d=4\ell+1$ for $\ell \in \mathbb N$, then $ R^*\left(\frac{2d}{d-1},~~ \frac{2d}{d-1}\right) \lesssim 1$; and
\item  if $d\ge 3$ is odd, and $-1\in  F$ is a square number, then
$R^*\left(\frac{2d}{d-1},~~ \frac{2d}{d-1}\right)\lesssim 1.$
\end{enumerate}
Recently the third author observed \cite{Le14} that one could potentially create more restrictive necessary conditions if one was able to construct certain highly overlapping packings of affine subspaces in $F^d$. In particular, the above restriction conjecture implies Dvir's theorem \cite{Dv09} that finite field Kakeya sets have full dimension. On the other hand, positive partial results have been obtained applying ideas from incidence geometry, Kakeya sets and operators, quadratic form theory, and analytic number theory. We refer the reader to \cite{Ko16} and \cite{Le14} for detailed summaries of the current best-known results in each of these cases.  

Here we will be interested in inequalities of the form $R^{*}(2\to r) \lesssim 1$ in even dimensions. It is elementary that $R^{*}(2\to 4) \lesssim 1$ for $d=2$, see \cite{MT04}, which is optimal. In higher dimensions the necessary conditions arising from affine subspaces suggest that one should have $R^{*}(2 \to r) \lesssim 1$ for $r \geq \frac{2d+4}{d}$. 

There has been a number of partial results to date in this direction. The finite field analog of the Stein-Tomas method, as developed by Mockenhaupt and Tao \cite{MT04}, gives $R^{*} (2 \to \frac{2d+2}{d-1} ) \lesssim 1$. This argument only uses the size and Fourier decay properties of the surface $P$, and applies in every dimension and is not sensitive to the arithmetic (such as if $-1$ is a square) of the finite field. In odd dimensions in which $-1$ is a square, this result gives the optimal $L^2$ restriction.

In even dimensions the Stein-Tomas result was improved by the first two authors \cite{IK09} in 2008 to $R^{*} (2 \to r ) \lesssim 1$ for $r > \frac{2d^2}{d^2-2d+2}$, with the endpoint $r=\frac{2d^2}{d^2-2d+2}$ obtained in \cite{LL10}. The second author \cite{Ko16} recently improved this to $r> \frac{6d+8}{3d-2}$ for $d\geq 6$, using, in part, ideas from \cite{Le13}. 

Here we further improve these results in $d\geq 4$ obtaining the optimal $L^2$ based results for even $d\geq 8$. In particular we prove

\begin{theorem}\label{newthm} Let $P\subset  F^d$ be the paraboloid. Then it follows that for every $\varepsilon >0$,
$$ R^*\left(2\to \frac{28}{9} \right) \lesssim 1 \quad\mbox{for}\,\,d=4;$$
$$ R^*\left(2\to \frac{8}{3} +\varepsilon\right) \lesssim 1 \quad\mbox{for}\,\,d=6; and$$
$$ R^*\left(2\to \frac{2d+4}{d} \right) \lesssim 1 \quad\mbox{for all even}\,\, d\ge 8.$$
\end{theorem}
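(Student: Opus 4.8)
The plan is to separate the argument into (i) a new additive energy estimate for subsets of the paraboloid, which is the one genuinely new ingredient, and (ii) a now-standard mechanism converting such an estimate into an $L^2$ restriction bound. For (ii) I would first reduce, by the usual dyadic decomposition of a function on $P$ into its level sets, to the restricted strong-type inequality $\|(1_E\,d\sigma)^{\vee}\|_{L^r(F^d)}\lesssim(|E|/|P|)^{1/2}$ for all $E\subseteq P$; on the open ranges the logarithmic loss is absorbed into $\varepsilon$, while at the endpoint $r=\tfrac{2d+4}{d}$ (claimed only for $d\ge 8$) a Lorentz-space refinement is needed, and there is enough room to run it in high dimensions but not at the critical dimension $d=6$. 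Writing $g_E:=(1_E\,d\sigma)^{\vee}$, I would combine the elementary bounds $\|g_E\|_{L^\infty}=|E|/|P|$ and $\|g_E\|_{L^2(F^d)}^2=|F|^d|E|/|P|^2$, the finite-field Stein--Tomas estimate $\|g_E\|_{L^{(2d+2)/(d-1)}}\lesssim(|E|/|P|)^{1/2}$, the splitting $g_E=g_P-g_{P\setminus E}$ together with the Gauss-sum bound $\|g_P\|_{L^r(F^d)}\lesssim1$ valid for $r\ge\tfrac{2d}{d-1}$ (which disposes of the range $|E|\sim|P|$), and the identity
$$ \|g_E\|_{L^4(F^d)}^4=\frac{|F|^d}{|P|^4}\,\Lambda(E),\qquad \Lambda(E):=\big|\{(a,b,c,d)\in E^4:\ a+b=c+d\}\big|, $$
by real interpolation over the level sets of $g_E$, as in the earlier partial results. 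This should reduce everything to the additive energy estimate
$$ \Lambda(E)\ \lesssim\ \frac{|E|^3}{|F|}\ +\ |E|^2\,|F|^{(d-2)/2}\qquad(E\subseteq P) $$
for even $d\ge 6$, plus a quantitatively weaker version of it when $d=4$ (which is what forces the exponent $\tfrac{28}{9}$ there). The exponent $(d-2)/2$ here is sharp: it is attained when $E$ is a maximal affine subspace of $P$, and it is exactly what is needed to reach $r=\tfrac{2d+4}{d}$.

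\emph{The additive energy estimate.} For (i) I would let $A\subseteq F^{d-1}$ be the image of $E$ under the projection onto the first $d-1$ coordinates, so $|A|=|E|$, and observe that substituting the linear constraint and using the defining relation of the paraboloid gives
$$ \Lambda(E)=\big|\{(\alpha,\beta,\gamma)\in A^3:\ \alpha+\beta-\gamma\in A,\ (\gamma-\alpha)\cdot(\gamma-\beta)=0\}\big|. $$
Expanding $\mathbf 1[(\gamma-\alpha)\cdot(\gamma-\beta)=0]=|F|^{-1}\sum_{t\in F}e\big(t(\gamma-\alpha)\cdot(\gamma-\beta)\big)$ isolates a main term $|F|^{-1}\Lambda(A)\le|E|^3/|F|$ from $t=0$ (where $\Lambda(A)$ is the additive energy of $A$ inside $F^{d-1}$). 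For $t\ne0$ the phase is a non-degenerate quadratic form in the edge vectors $\gamma-\alpha,\ \gamma-\beta$; I would complete the square and invoke Gauss-sum evaluations for quadratic forms over $F^{d-1}$ — the step where the parity of $d$, equivalently the oddness of $d-1$, is used — to extract a full power of $|F|$ of cancellation in each such sum, \emph{except} to the extent that $A$ concentrates along an affine subspace of $F^{d-1}$ that is totally isotropic for $\underline x\cdot\underline x$. A Cauchy--Schwarz and incidence argument should quantify this concentration; since totally isotropic subspaces of $(F^{d-1},\underline x\cdot\underline x)$ have dimension at most $(d-2)/2$, this bounds the $t\ne0$ contribution by $\lesssim|E|^2|F|^{(d-2)/2}$, completing the estimate.

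\emph{The hard part.} The main obstacle will be the additive energy estimate, and within it the $t\ne0$ sums. The difficulty is genuine: the phase $t(\gamma-\alpha)\cdot(\gamma-\beta)$ degenerates \emph{identically} when $A$ lies in a totally isotropic affine subspace, so there is no cancellation at all there and $\Lambda(E)$ really does approach $|E|^3$ — this is exactly the subspace obstruction recorded in \eqref{Necessary2}. Thus the crux is to show quantitatively that totally isotropic subspaces are the \emph{only} obstruction — that any loss of cancellation forces strong correlation of $A$ with one of them — and then to extract the sharp exponent $(d-2)/2$ out of this dichotomy rather than a lossy one. This is where the method is tight: it should give the sharp energy estimate for even $d\ge6$ (with enough extra room at $d\ge8$ to reach the endpoint restriction exponent), yields only a weaker exponent for $d=4$, and at $d=6$ is exactly critical for the restriction estimate, which is why the endpoint $r=\tfrac83$ is missed there.
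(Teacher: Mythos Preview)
Your proposal contains the right headline ingredient --- the additive energy bound $\Lambda(E)\lesssim |F|^{-1}|E|^3+|F|^{(d-2)/2}|E|^2$ --- but both halves of your plan diverge from the paper in ways that matter.

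\medskip
\textbf{The conversion mechanism (ii).} You work in the \emph{extension} (primal) direction: reduce to $\|(1_Ed\sigma)^\vee\|_{L^r}\lesssim (|E|/|P|)^{1/2}$, compute $\|(1_Ed\sigma)^\vee\|_{L^4}$ from $\Lambda(E)$, and interpolate with $L^2$, $L^\infty$, Stein--Tomas. The paper does something structurally different: it works in the \emph{restriction} (dual) direction, feeding the energy estimate into the Mockenhaupt--Tao machine (Lemma~\ref{lem:MT0}). There one bounds $\|\widehat g\|_{L^2(P,d\sigma)}$ for $g:F^d\to\mathbb C$ by the $L^4$ extension norms of the \emph{horizontal slices} $G_z\subseteq P$ of $\operatorname{supp}(g)$; the energy estimate is then applied to each slice, yielding the three-term bound of Lemma~\ref{lem:MT}, and a dyadic decomposition of $g$ (not of $(1_Ed\sigma)^\vee$) finishes. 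This is not a cosmetic difference. In your primal scheme, $L^2$--$L^4$ interpolation gives the target bound only when $|E|\le |F|^{d/2}$: for $|E|>|F|^{d/2}$ the term $|E|^3/|F|$ dominates the energy and the interpolated estimate misses the target by a factor $(|E|^2/|F|^d)^{1/(2d+4)}$, worst at $|E|\sim|P|$. Stein--Tomas lies at $r_{ST}=(2d+2)/(d-1)>r$, and $L^\infty$ sits above $L^4$, so neither helps you push below $r_{ST}$; the complement trick $g_E=g_P-g_{P\setminus E}$ only rescues you when $|E|$ is within $|F|^{(d-1)/2}$ of $|P|$. The Mockenhaupt--Tao machine is precisely what bridges this gap --- it is how \cite{Ko16} already improved on the direct approach of \cite{IK09} with the \emph{same} class of energy inputs --- and your outline omits it.

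\medskip
\textbf{The energy estimate (i).} Your proposed proof via $\mathbf 1[(\gamma-\alpha)\cdot(\gamma-\beta)=0]=|F|^{-1}\sum_t e(t(\gamma-\alpha)\cdot(\gamma-\beta))$ correctly isolates the $|E|^3/|F|$ main term, but the $t\ne 0$ analysis is where the content lies and your sketch is not a proof. The phase $t(\gamma-\alpha)\cdot(\gamma-\beta)$ is bilinear, not a quadratic form in a single free variable, and $\alpha,\beta,\gamma$ are constrained to $A$ with the additional condition $\alpha+\beta-\gamma\in A$; there is no ``complete the square and apply Gauss sums'' step available here, and the passage from ``loss of cancellation'' to ``concentration on an isotropic subspace'' is asserted rather than argued. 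The paper's proof is quite different: it splits $\Lambda(E)$ according to whether $\underline z-\underline y$ lies on the zero-sphere $S_0\subset F^{d-1}$. The $S_0$ piece is handled by Fourier expansion of $S_0$ and Gauss/Kloosterman bounds on $S_0^\vee$ (this is where the parity of $d$ enters, via Lemma~\ref{sphereCoBound}). The off-$S_0$ piece is reduced, via a Galilean change of variables on $P$, to counting pairs $(a,b)$ with $a-b\in P$ and $\underline b\notin S_0$; the key move (Lemma~\ref{Mlem1}) is the substitution $\underline b\mapsto \underline b/(\underline b\cdot\underline b)$, which turns this into a point--hyperplane incidence count $\#\{a\cdot b'=1\}$ that succumbs to a short character-sum/Cauchy--Schwarz argument. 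None of this appears in your outline.
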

The key new ingredient compared to prior work on this problem is improved estimates for the additive energy of subsets of the paraboloid of intermediate dimension. See Lemma \ref{lem:AE} below. 

\subsection{Discrete Fourier analysis} 
We give a brief overview of the notation and the basic Fourier analytic methods that are standard in this area. We refer the reader to \cite{Le14, MT04} for a more detailed exposition.

We will use $F$ to denote a finite field and write $F^d$ for the $d$-dimensional vector space over $F$ which we will endow with the counting measure $dx$. Given a function $f : F^d \to \mathbb C$ we will often consider levels where the value of the function is proportional in size to a given value. In particular we write $f(x) \sim \lambda$ to indicate that $\lambda/2 \leq |f(x)| \leq 2\lambda$. Given $x \in F^d$ we will let $\underline{x}$ denote the first $d-1$ coordinates of $x$ and $x_d$ the $d$-th coordinate or, in other words, $x=(\underline{x},x_d)$. This allows us to parameterize the $d$-dimensional paraboloid as $P:=\{(\underline{x},\underline{x}\cdot \underline{x}) : \underline{x} \in F^{d-1}\}$.

We let $e(\cdot)$ denote a nontrivial additive character on $F$ and recall the orthogonality relation 
$$|F|^{-d} \sum_{x\in F^d} e( \xi \cdot x) = \delta(\xi) =: \left\{ \begin{array}{ll} 0  \quad&\mbox{if}~~ \xi\neq (0,\dots,0),\\
1  \quad &\mbox{if} ~~\xi = (0,\dots,0). \end{array}\right.$$  
Given a complex-valued function $f : F^d \to \mathbb C$ we define its Fourier transform as
$$\widehat{f}(\xi) = \sum_{x \in F^d} f(x) e(-x\cdot \xi).$$
We will write $F^d_{*}$ for the dual space of $F^d$ and endow it with the normalized counting measure which assigns a mass of $|F|^{-d}$ to each point. The inverse Fourier transform of a function $g : F^d_{*} \to \mathbb C$ is then given by:
$$ g^\vee(x)= |F|^{-d}\sum_{\xi \in F^d} g(\xi)e(\xi\cdot x).$$
For a function $f: F^d\rightarrow \mathbb C$ we write $||f||_{L^p(F^d)}= \left(\sum_{x\in F^d} |f(x)|^p \right)^{1/p}$ and for $g: F^d_{*}\rightarrow \mathbb C$ we write $||g||_{L^p(F^d_{*})}= \left(|F|^{-d}\sum_{x\in F^d} |f(x)|^p \right)^{1/p}$. We will also  work with functions defined on a surface $S\subset F^{d}_{*}$ in which case we define the surface measure $d\sigma$ to the measure that assigns mass $|S|^{-1}$ to each element of $S$, so that $||g||_{L^p(F^d_{*},d\sigma)} = \left(|S|^{-1}\sum_{x\in S} |f(x)|^p \right)^{1/p}$. With these conventions, Parseval's theorem states $\|f^\vee\|_{L^2(F^d)}=\|f\|_{L^2(F^d_{*})}$ and
$$\left<f,g\right>_{F^{d}} = \left<\widehat{f},\widehat{g} \right>_{F^{d}_{*}} $$
where the first inner product is taken with respect to the counting measure and the second the normalized counting measure on $F^{d}_{*}$. From Parseval and duality it easily follows that the extension inequalities we seek to prove have equivalent restriction formulations. That is to say the following two inequalities are equivalent: 
\begin{equation}\label{eq:extenF}|| (fd\sigma)^{\vee}||_{L^{r}(F^d)} \leq R^{*}(p\to r) ||f||_{L^{p}(P,d\sigma)} \end{equation}
\begin{equation}\label{eq:restF} ||\widehat{g}||_{L^{p'}(P,d\sigma)} \leq R^{*}(p\to r) ||g||_{L^{r'}(F^d)} .
\end{equation}
As usual $r'$ and $p'$ denote the conjugate exponents of $r$ and $p$ respectively (i.e. $1=\frac{1}{p}+\frac{1}{p'}$). We define convolution on $F^n$ as $f * g (x) = \sum_{y \in F^{d}}f(y)g(x-y)$, and on $F^n_{*}$ as $f_{2} * g_{2} (\xi) = |F|^{-d}\sum_{\zeta \in F^{d}}f_{2}(\zeta)g_{2}(\xi-\zeta)$.
We will be interested in upper bounds on the quantity $||\widehat{g}||_{L^2(S,d\sigma)} := \left(|S|^{-1}\sum_{\xi \in S} |\widehat{g}(\xi)|^2 \right) $. From Parseval's theorem, we have that 
\begin{equation}\label{eq:dualL2}\left< \widehat{g},G \right> = \left<g, G*(d\sigma)^{\vee}\right>_{F^{d}}.\end{equation}
Letting $d\sigma$ denote the surface measure on the $d$-dimensional paraboloid and $\mathcal{G}(t) :=\sum_{\substack{s \in F \\ t \neq 0}}e(ts^2)$ denote the classical Gauss sum, an easy computation (see (18) in \cite{MT04}) gives us that
\begin{equation}\label{eq:evaulation}(d\sigma)^{\vee}(x) = |F|^{1-d} \sum_{\xi \in P} e(x\cdot \xi) 
= \left\{\begin{array}{ll} |F|^{1-d} e\left(\frac{\underline{x}\cdot\underline{x}}{-4x_d}\right) \mathcal{G}(x_d)^{d-1}\quad &\mbox{if} \quad x_d\ne 0\\

 0 \quad &\mbox{if} \quad x_d=0,~x\ne \mathbf{0} \\
                1 \quad &\mbox{if} \quad x=\mathbf{0},\end{array}\right. 
\end{equation}
where it is well-known (see (19) in \cite{MT04}) that $|\mathcal{G}(t)| = |F|^{\frac{1}{2}}$ for $t\neq 0$. It follows that $|(d\sigma)^{\vee}(x)| \leq |F|^{\frac{1-d}{2}}$ for $x\ne \mathbf{0}.$ Combining \eqref{eq:evaulation} with \eqref{eq:dualL2} one has that

\begin{equation}\label{eq:decayL1}||\widehat{g}||_{L^2(P,d\sigma)} = \left| \left<g, g*(d\sigma)^{\vee}\right>_{F^{d}} \right|^{\frac{1}{2}} \leq ||g||_{L^2(F^d)} + ||g||_{L^1(F^d)}|F|^{\frac{1-d}{4}}. \end{equation}

Interpolating this estimate with the following consequence of Parseval 
\begin{equation}\label{eq:bigL2}
\|\widehat{g}\|_{L^2(P, d\sigma)} \le |F|^{\frac{1}{2}} ||g||_{L^{2}(F^d)}
 \end{equation} yields the finite field Stein-Tomas estimate
$$ ||\widehat{g}||_{L^{2}(P,d\sigma)} \lesssim ||g||_{L^{\frac{2d+2}{d+3}}(F^d)} .$$
We refer the reader to \cite{MT04} for details. Another standard application (see \cite{Le14} Lemma 16) of \eqref{eq:decayL1} is the following epsilon removal lemma:
\begin{lemma}\label{lem:epsRem}Assume that for every $\epsilon>0$ one has a constant $c(\epsilon)$ such that the following inequality holds
$$ \|\widehat{g}\|_{L^2(P, d\sigma)} \lesssim c(\epsilon) |F|^{\epsilon} \|g\|_{L^{p_{0}}(F^d)}$$
then for every $p<p_0$, we have the estimate
$$ \|\widehat{g}\|_{L^2(P, d\sigma)} \lesssim \|g\|_{L^{p}(F^d)}.$$
\end{lemma}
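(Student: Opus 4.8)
\emph{Proof proposal.} The plan is to run the standard dyadic level-set decomposition, using the hypothesis on the ``bulk'' level sets and the pointwise Fourier decay estimate \eqref{eq:decayL1} on the ``sparse'' ones. After rescaling we may assume $\|g\|_{L^p(F^d)}=1$; and since for each fixed field size the map $g\mapsto \widehat g|_{P}$ is a linear map on a finite-dimensional space (and vanishes when $g=0$), there are only finitely many fields of any bounded size to worry about, so we may assume $|F|$ is as large as we please. First I would discard the part of $g$ supported where $|g|\le |F|^{-Nd}$ for a fixed large integer $N$: by \eqref{eq:decayL1} its extension has $L^2(P,d\sigma)$-norm $\lesssim |F|^{-(N-1)d}\le 1$, hence is harmless. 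For the remaining function write $g=\sum_j g_j$ with $g_j=g\cdot 1_{E_j}$ and $E_j=\{x: 2^j\le |g(x)|<2^{j+1}\}$. Since $\|g\|_\infty\le\|g\|_{p}=1$ and $|g|>|F|^{-Nd}$ on the support, only scales $-Nd\log_2|F|-1< j\le 0$ occur, i.e.\ $O(\log|F|)$ of them, and the normalization forces $2^{jp}|E_j|\le 1$, equivalently $2^j|E_j|^{1/p}\le 1$.

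By the triangle inequality it suffices to bound $\sum_j \|\widehat{g_j}\|_{L^2(P,d\sigma)}$. Fix a small threshold $\beta>0$ to be chosen. In the \textbf{bulk case} $|E_j|\ge |F|^\beta$, I apply the hypothesis to $g_j$: $\|\widehat{g_j}\|_{L^2(P,d\sigma)}\lesssim c(\varepsilon)|F|^\varepsilon\|g_j\|_{L^{p_0}(F^d)}\lesssim c(\varepsilon)|F|^\varepsilon\,2^{j}|E_j|^{1/p_0}$. Since $2^j\le |E_j|^{-1/p}$ and $p<p_0$, we get $2^j|E_j|^{1/p_0}\le |E_j|^{1/p_0-1/p}\le |F|^{-\beta(1/p-1/p_0)}$, so such a term is $\lesssim c(\varepsilon)|F|^{\varepsilon-\beta(1/p-1/p_0)}$. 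In the \textbf{sparse case} $|E_j|<|F|^\beta$, I apply \eqref{eq:decayL1} to $g_j$, obtaining $\|\widehat{g_j}\|_{L^2(P,d\sigma)}\le \|g_j\|_{L^2(F^d)}+|F|^{\frac{1-d}{4}}\|g_j\|_{L^1(F^d)}\le 2^{j+1}|E_j|^{1/2}+|F|^{\frac{1-d}{4}}2^{j+1}|E_j|$; using $|E_j|\le 2^{-jp}$ the first term is $\lesssim 2^{j(1-p/2)}$, while using $2^j|E_j|^{1/p}\le 1$ together with $|E_j|<|F|^\beta$ the second term is $\lesssim |F|^{\frac{1-d}{4}}|E_j|^{1-1/p}\lesssim |F|^{\frac{1-d}{4}+\beta}$.

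It then remains to choose constants and sum. The first term of the sparse case sums over $j\le 0$ in a convergent geometric series (here $p<p_0\le 2$ is used, so $1-p/2>0$), contributing $O_p(1)$. For the rest, pick $\beta$ small enough that $\tfrac{1-d}{4}+\beta<0$ and then $\varepsilon$ small enough that $\varepsilon<\beta(1/p-1/p_0)$; each of the $O(\log|F|)$ remaining terms is then a fixed negative power of $|F|$ (times the now-fixed constant $c(\varepsilon)$), so their total is $\lesssim (\log|F|)\,|F|^{-\delta}\lesssim 1$ once $|F|$ is large, which we have arranged. The only genuine point requiring care is the bookkeeping in this last step --- balancing the cutoff exponent $\beta$ against the decay exponent $\tfrac{d-1}{4}$ of \eqref{eq:decayL1} and against the gap $1/p-1/p_0$ --- and, as usual, the substantive input is the strong pointwise bound $|(d\sigma)^{\vee}(x)|\le |F|^{\frac{1-d}{2}}$ for $x\ne\mathbf 0$ behind \eqref{eq:decayL1}, which makes the sparse scales essentially free and is exactly what renders epsilon removal in the finite field setting far softer than its Euclidean counterpart (which requires wave-packet analysis). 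This is Lemma~16 of \cite{Le14}.
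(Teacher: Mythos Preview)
Your proof is correct and is precisely the standard dyadic level-set argument that the paper defers to by citing Lemma~16 of \cite{Le14}; the paper gives no independent proof to compare against. The only implicit assumption worth flagging is your use of $p_0\le 2$ to make the geometric series $\sum_{j\le 0}2^{j(1-p/2)}$ converge, but this is automatic: testing the hypothesis on $g=1_{F^d}$ forces $p_0\le \tfrac{2d}{d+1}<2$.
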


\section{Restriction estimates from additive energy estimates}
We start by recalling the Mockenhaupt-Tao machine which allows one to relate the quantity $\|\widehat{g}\|_{L^2(P, d\sigma)}$ to the additive energy of the support of certain level sets of $g: F^d \rightarrow \mathbb C$. This same argument essentially appears, albeit in different notation, as Lemma 28 in \cite{Le14} or Lemma 3.6 in \cite{Ko16} (and is implicit in \cite{MT04}), but we sketch it below for the sake of completeness. Given a function $g:F^d \to \mathbb C$ we will denote its support by $G$. To help ease notation, we will also use $G$ to refer to the characteristic function of $G$. Moreover we define $\tilde{G_z} : F^{d-1} \rightarrow \{0,1\}$ by $\tilde{G_z}(\underline{x})= G(\underline{x},z)$. Finally we define $G_z: P \to \{0,1\}$ by $G_z(\underline{x},\underline{x}\cdot \underline{x})=\tilde{G_z}(\underline{x})$ (and $0$ for $x \notin P$). With this notation we have the following

\begin{lemma}\label{lem:MT0} Let $g : F^d \rightarrow \mathbb C$ such that $|g| \lesssim 1$ on its support. We then have
$$ \|\widehat{g}\|_{L^2(P, d\sigma)} \lesssim |supp(g)|^{\frac{1}{2}} + |supp(g)|^{\frac{3}{8}} |F|^{\frac{d-1}{4}}\left( \sum_{z \in F}||(G_z d\sigma)^\vee ||_{L^4(F^d)} \right)^{\frac{1}{2}} .$$
\end{lemma}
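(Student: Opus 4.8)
The plan is to slice both the support $G=\mathrm{supp}(g)$ and the Fourier transform of $g$ by the last coordinate. This is just the familiar decomposition $(d\sigma)^\vee=\delta_{\mathbf 0}+\big((d\sigma)^\vee-\delta_{\mathbf 0}\big)$ carried out on the Fourier side: the $\delta$‑part becomes a harmless diagonal term of size $\lesssim|G|$, while the remainder becomes a sum of bilinear quadratic exponential sums indexed by pairs of ``heights'', and the key observation is that summing these over one of the two heights is precisely a Gauss‑sum–type average of the relevant quadratic phase, which reconstitutes the $L^4$ extension norms $\|(G_zd\sigma)^\vee\|_{L^4(F^d)}$ of the slices.

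Concretely, I would start from $\|\widehat g\|_{L^2(P,d\sigma)}^2=|F|^{1-d}\sum_{\underline\xi}|\widehat g(\underline\xi,\underline\xi\cdot\underline\xi)|^2$ and write $\widehat g(\underline\xi,\underline\xi\cdot\underline\xi)=\sum_{t\in F}e(-t\,\underline\xi\cdot\underline\xi)\,\widehat{g_t}(\underline\xi)$, where $g_t(\underline x):=g(\underline x,t)$ is the $t$‑th slice (a function on $F^{d-1}$ with $|\mathrm{supp}(g_t)|=N_t$, so $\sum_tN_t=|G|$) and $\widehat{g_t}$ is its $(d-1)$‑dimensional Fourier transform. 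Squaring out and summing in $\underline\xi$ first, the diagonal terms $x_d=y_d$ contribute, by Parseval on $F^{d-1}$ and $|g|\lesssim1$, at most a constant times $\sum_tN_t=|G|$ — the first term of the lemma. For the off‑diagonal terms, completing the square in $\sum_{\underline\xi}e\big(-z\,\underline\xi\cdot\underline\xi+(\underline v-\underline u)\cdot\underline\xi\big)$ with $z:=x_d-y_d\neq0$ produces a Gauss‑sum factor $\mathcal G(-z)^{d-1}$, of modulus $|F|^{(d-1)/2}$, times the bilinear form
$$\Phi_z(x_d,y_d):=\sum_{\underline u,\underline v}g_{x_d}(\underline u)\,\overline{g_{y_d}(\underline v)}\,e\!\left(\tfrac{(\underline u-\underline v)\cdot(\underline u-\underline v)}{4z}\right),$$
so it remains to show $\sum_{x_d\neq y_d}|\Phi_z(x_d,y_d)|\lesssim|F|^{d-1}|G|^{3/4}\sum_z\|(G_zd\sigma)^\vee\|_{L^4(F^d)}$.

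Expanding $(\underline u-\underline v)\cdot(\underline u-\underline v)$ factors $\Phi_z(x_d,y_d)=\sum_{\underline u}g_{x_d}(\underline u)\,e(a\,\underline u\cdot\underline u)\,\widehat B(2a\underline u)$ with $a:=1/(4z)$ and $B(\underline v):=\overline{g_{y_d}(\underline v)}\,e(a\,\underline v\cdot\underline v)$. I would then apply Hölder in $\underline u$ with exponents $\tfrac43,4$: the first factor contributes $\big(\sum_{\underline u}|g_{x_d}(\underline u)|^{4/3}\big)^{3/4}\lesssim N_{x_d}^{3/4}$, and the second, after the substitution $\underline\eta=2a\underline u$, contributes $\|\widehat B\|_{L^4(F^{d-1})}$. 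Fixing $y_d$ and applying Hölder once more in $x_d$, again with exponents $\tfrac43,4$ (extending the first sum to all $x_d$, which only helps), gives $\sum_{x_d\neq y_d}|\Phi_z|\lesssim|G|^{3/4}\big(\sum_{x_d\neq y_d}\|\widehat B\|_{L^4(F^{d-1})}^4\big)^{1/4}$ for each $y_d$.

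The heart of the matter — and the step I expect to require the most care — is the evaluation of $\sum_{x_d\neq y_d}\|\widehat B\|_{L^4(F^{d-1})}^4$. Expanding the fourth power and summing in $\underline\eta$ yields a sum over additive quadruples $\underline v_1+\underline v_2=\underline v_3+\underline v_4$ in the $y_d$‑slice, weighted by $\overline{g_{y_d}(\underline v_1)g_{y_d}(\underline v_2)}g_{y_d}(\underline v_3)g_{y_d}(\underline v_4)$ and by the ``wrong'' phase $e\big(a(\underline v_1\cdot\underline v_1+\underline v_2\cdot\underline v_2-\underline v_3\cdot\underline v_3-\underline v_4\cdot\underline v_4)\big)$. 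As $x_d$ runs over $F\setminus\{y_d\}$ the quantity $a=1/(4(x_d-y_d))$ runs over $F^{*}$, so summing this phase replaces it by $|F|\delta(\cdot)-1$: the $-1$ piece contributes $-\|\widehat{g_{y_d}}\|_{L^4(F^{d-1})}^4\leq0$, and the $|F|\delta$ piece forces the paraboloid equation $\underline v_1\cdot\underline v_1+\underline v_2\cdot\underline v_2=\underline v_3\cdot\underline v_3+\underline v_4\cdot\underline v_4$, so what remains is exactly $|F|^{4(d-1)}$ times the fourth power of the $L^4$ extension norm of the lift of $g_{y_d}$ to $P$. Since $|g|\lesssim1$ on its support, that quantity is — up to the implied constant, both being $|F|^{4-3d}$ times the (weighted, resp.\ unweighted) count of such quadruples — bounded by $|F|^{4(d-1)}\|(G_{y_d}d\sigma)^\vee\|_{L^4(F^d)}^4$. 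Feeding this back, summing the bound $|G|^{3/4}|F|^{d-1}\|(G_{y_d}d\sigma)^\vee\|_{L^4(F^d)}$ over $y_d$, multiplying by the overall factor $|F|^{(1-d)/2}$ (the $|F|^{1-d}$ prefactor times $|\mathcal G(-z)^{d-1}|=|F|^{(d-1)/2}$), and taking square roots produces the claimed inequality.
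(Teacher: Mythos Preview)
Your argument is correct and is, at bottom, the Mockenhaupt--Tao machine, but you have organized it differently from the paper. The paper works on the physical side: it writes $(d\sigma)^\vee=\delta+K$, applies a single H\"older $\langle g,g*K\rangle\le\|g\|_{4/3}\|g*K\|_4$ (giving the factor $|G|^{3/4}$ at once), then slices $g$ by the last coordinate and uses the \emph{translation invariance} of convolution to reduce to the single slice $z=0$; a change of variables $t=-1/(4x_d)$, $u=\underline{x}/(2x_d)$ then shows directly that $\|G_0*K\|_{4}^{4}\le|F|^{2d-2}\|(G_0d\sigma)^\vee\|_{4}^{4}$. You instead expand $\|\widehat g\|_{L^2(P,d\sigma)}^2$ on the Fourier side, square out into slice pairs $(x_d,y_d)$, and must therefore contend with all cross terms; you recover the same reduction through two nested H\"olders and the orthogonality trick $\sum_{a\in F^*}e(a\,\cdot)=|F|\delta(\cdot)-1$, which reconstitutes the paraboloid constraint from the family of quadratic phases. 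The two routes yield identical bounds; the paper's is shorter because translation invariance eliminates the cross terms before they appear, whereas your orthogonality step undoes them after the fact.
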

\begin{proof}(Sketch) From \eqref{eq:evaulation} we may write $(d\sigma)^{\vee}(x) = \delta(x) + K(x)$ with 
$$K(x) := |F|^{1-d} e\left(\frac{\underline{x}\cdot\underline{x}}{-4x_d}\right) \mathcal{G}(x_d)^{d-1}$$ 
for $x_d \neq 0$ and $0$ otherwise. Using \eqref{eq:dualL2} we have
$$ ||g||_{L^{2}(S,d\sigma)}^2 = \left<g, g*(d\sigma)^{\vee}\right>_{F^{d}}\le ||g||_{L^2(F^d)}^2 + ||g||_{L^{4/3}(F^d)} ||g * K||_{L^4(F^d)}.$$
Evaluation of the first two norms above, using the fact that $|g| \lesssim 1$, reduces to the lemma to the claim $||g * K||_{L^4(F^d)}\lesssim  |F|^{\frac{d-1}{2}} \times \sum_{z \in F}||(G_z d\sigma)^\vee ||_{L^4(F^d)}$. By the triangle inequality this reduces to the claim $||G_{z} * K||_{L^4(F^d)} \lesssim |F|^{\frac{d-1}{2}} ||(G_z d\sigma)^\vee ||_{L^4(F^d)}$. Since convolution commutes with translations, it suffices to establish this for $z=0$. We compute:
$$ \sum_{x \in F^d}|{G}_{z} * K(x)|^4 =  |F|^{2-2d}  \sum_{\underline{x} \in F^{d-1}}\sum_{\substack {x_d \in F \\ x_d \neq 0}} \left|\sum_{y \in F^{d-1}} \tilde{G}_{z}(y) e\left( (\underline{x}-y)\cdot(\underline{x}-y)/(-4x_d) \right)\right|^4.$$  
Expanding $(\underline{x}-y)\cdot(\underline{x}-y)/(-4x_d) = (\underline{x}\cdot \underline{x} -2\underline{x}\cdot y + y\cdot y)/(-4x_d)$ and performing the change of variables $t = 1/(-4 x_d)$ and $u = \underline{x}/(2x_d)$ we have
$$(\underline{x}-y)\cdot(\underline{x}-y)/(-4x_d) = u\cdot u/(4t) +u\cdot y +t y\cdot y.$$
With this change of variables, the left-hand side above is equal to
$$ |F|^{2-2d}  \sum_{u \in F^{d-1}}\sum_{\substack {t \in F \\ t \neq 0}} \left|e\left(\frac{u \cdot u}{4t} \right) \sum_{y \in F^{d-1}} \tilde{G}_{z}(y) e\left( u \cdot y + t y\cdot y  \right)\right|^4$$  
$$ =|F|^{2d-2}  \sum_{u \in F^{d-1}}\sum_{\substack {t \in F \\ t \neq 0}} \left| |P|^{-1}\sum_{y \in F^{d-1}} \tilde{G}_{z}(y) e\left( u \cdot y + t y\cdot y  \right)\right|^4$$
$$ \le  |F|^{2d-2}  \sum_{x \in F^{d}}| (G_{z}d\sigma)^{\vee}(x)|^{4}.$$ 
Taking fourth roots completes the proof.
\end{proof}

The usefulness of Lemma \ref{lem:MT0} follows from the fact that the $L^4$ norm of the extension operator applied to the slices of $g$, that is $||(G_z d\sigma)^\vee ||_{L^4(F^d)}$, has a combinatorial interpretation. In particular if $G_z$ is majorized by the characteristic function of a subset of $S\subseteq P$, then direct computation  (see Corollary 25 in \cite{Le14}) shows $||(1_S d\sigma)^\vee ||_{L^4(F^d)} = |F|^{\frac{4-3d}{4}} (\Lambda(S))^{\frac{1}{4}}$, where 
$$\Lambda(S):=\sum_{\substack{ a,b,c,d \in S \\ a+b=c+d}} 1$$  denotes the additive energy of the set of $S$.  The new ingredient in the current work is the following estimate for the additive energy of a subset of the paraboloid, which will be proven in the next section:

\begin{lemma}\label{lem:AE} Let $P\subset  F^d$ be the paraboloid.
If $d\ge 4$ is even  and $E\subseteq  P$, then we have
$$ \Lambda(E)=\sum_{\substack{x,y,z,w\in E \\ x+y=z+w}} 1 \lesssim  |F|^{-1}|E|^3 + |F|^{\frac{d-2}{2}} |E|^2.$$
\end{lemma}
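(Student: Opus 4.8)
The plan is to bound $\Lambda(E)$ by a Fourier-analytic/geometric argument that counts solutions to $x+y=z+w$ with $x,y,z,w\in E\subseteq P$. Writing $x=(\underline x,\underline x\cdot\underline x)$ etc., the equation $x+y=z+w$ in $F^d$ becomes the pair of conditions $\underline x+\underline y=\underline z+\underline w$ in $F^{d-1}$ together with $\underline x\cdot\underline x+\underline y\cdot\underline y=\underline z\cdot\underline z+\underline w\cdot\underline w$ in $F$. Fixing the common value $v:=\underline x+\underline y=\underline z+\underline w$, and substituting $\underline y=v-\underline x$, $\underline w=v-\underline z$, the scalar equation simplifies (the $\underline x\cdot\underline x$ and $v\cdot v$ terms partly cancel) to $v\cdot\underline x = v\cdot\underline z$, i.e. $v\cdot(\underline x-\underline z)=0$. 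So $\Lambda(E)$ counts quadruples $(\underline x,\underline z,v)$ with $\underline x\in\pi(E)$, $\underline z\in\pi(E)$, $v-\underline x\in\pi(E)$, $v-\underline z\in\pi(E)$ and $v\cdot(\underline x-\underline z)=0$, where $\pi$ is projection to $F^{d-1}$ (which is a bijection onto $E$ up to identifying $E$ with its base). This reduces the problem to an incidence-type count between points and the hyperplanes $\{v:v\cdot n=0\}$ as $n=\underline x-\underline z$ ranges over difference vectors.

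The first key step is the trivial/diagonal bound: the number of quadruples with $\underline x=\underline z$ (equivalently $n=0$, so the orthogonality condition is vacuous) is at most $\sum_{v}|E\cap(v-E)|\,|E|\le |E|^3/|F|^{\,?}$ — more precisely $\sum_v |E\cap(v-E)| = |E|^2$, and for each such $(v,\underline x)$ there are $|E\cap(v-E)|$ choices of $\underline z$; summing gives $\sum_v|E\cap(v-E)|^2$. Here one uses Cauchy–Schwarz together with the trivial bound $|E\cap(v-E)|\le |E|$ and $\sum_v|E\cap(v-E)|=|E|^2$ to get $\sum_v|E\cap(v-E)|^2\le |E|\cdot\max_v|E\cap(v-E)|\cdot(\text{number of nonzero }v)$, which is not yet $|F|^{-1}|E|^3$; instead one should note $\sum_v|E\cap(v-E)|^2$ is itself essentially $\Lambda(\pi(E))$ in $F^{d-1}$, and the point is to gain from the extra linear constraint $v\cdot n=0$ when $n\ne0$. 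So the real content is the \emph{off-diagonal} term.

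For the off-diagonal term, fix $n\ne 0$. One must count pairs $(\underline x,\underline z)\in E\times E$ with $\underline x-\underline z=n$ together with the number of $v$ satisfying $v\cdot n=0$, $v-\underline x\in E$, $v-\underline z\in E$. The hyperplane $\{v:v\cdot n=0\}$ has size $|F|^{d-2}$, and we need $v$ to lie in $(\underline x+E)\cap(\underline z+E)\cap\{v\cdot n=0\}$. Summing over all $\underline x$ with $\underline x-\underline z=n$ and all valid $v$, and then over $n\ne 0$, I would dyadically decompose according to the size of the relevant intersections and apply Cauchy–Schwarz, bounding one factor by the hyperplane size $|F|^{d-2}$ and the other by $L^2$ quantities that telescope back into $\Lambda(E)$ or into $|E|^2$. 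The term $|F|^{\frac{d-2}{2}}|E|^2$ in the statement is exactly what one expects from a square-root cancellation / Cauchy–Schwarz step where $|F|^{d-2}$ (the hyperplane size) enters under a square root against $|E|^2$ pairs; the bound should in fact come out as $|F|^{-1}|E|^3 + |F|^{\frac{d-2}{2}}|E|^2$ after optimizing the dyadic split, with the first term dominating when $|E|\ge |F|^{\frac d2}$ and the second when $|E|\le |F|^{\frac d2}$.

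The main obstacle I anticipate is making the off-diagonal count genuinely rigorous: one is essentially proving an incidence bound between $|E|^2$ "points" $v$ and a family of hyperplanes indexed by difference vectors $n$, and naive Cauchy–Schwarz tends to lose a factor of $|F|$ (giving only $|F|^{d-1}|E|^2$ or worse). Getting the sharp exponent $\frac{d-2}{2}$ requires exploiting that the hyperplane $\{v\cdot n=0\}$ and the translate $\underline x+E$ interact with square-root cancellation — concretely, I expect one must expand $|E\cap(\underline x+E)\cap\{v\cdot n=0\}|$ via an additive character sum over $n$, use the Gauss-sum evaluation \eqref{eq:evaulation} (the key place where "$d$ even" enters, since $\mathcal G(t)^{d-1}$ is a clean power of $|F|^{1/2}$ times a multiplicative character that is trivial precisely when $d-1$ is odd), and then apply Parseval. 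So the technical heart is a careful character-sum computation analogous to the proof of Lemma~\ref{lem:MT0}, with the parity of $d$ ensuring the Gauss-sum factors collapse favorably; everything else is dyadic pigeonholing and Cauchy–Schwarz.
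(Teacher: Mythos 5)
There is a genuine gap here, and also an algebra slip at the very first reduction. Substituting $\underline{y}=v-\underline{x}$, $\underline{w}=v-\underline{z}$ into $\underline{x}\cdot\underline{x}+\underline{y}\cdot\underline{y}=\underline{z}\cdot\underline{z}+\underline{w}\cdot\underline{w}$ gives $2\underline{x}\cdot\underline{x}-2v\cdot\underline{x}=2\underline{z}\cdot\underline{z}-2v\cdot\underline{z}$, i.e. $(\underline{x}-\underline{z})\cdot(\underline{x}+\underline{z}-v)=0$, which is $(\underline{x}-\underline{z})\cdot(\underline{z}-\underline{y})=0$; the quadratic terms do \emph{not} cancel, so the constraint is not $v\cdot(\underline{x}-\underline{z})=0$. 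This is repairable (for fixed $n=\underline{x}-\underline{z}$ it is still an affine hyperplane in $v$), but the correct form matters: the relevant degeneracy is not $n=0$ but the much larger event that a difference vector is null, i.e. lies on the zero sphere $S_0=\{u:u\cdot u=0\}$, and that is exactly the dichotomy the paper's proof is organized around. Your ``diagonal'' discussion also conflates the $\underline{x}=\underline{z}$ contribution (which is exactly $|E|^2$ and harmless) with $\sum_v|E\cap(v-E)|^2$, the full energy of the projection, which cannot be bounded by $|F|^{-1}|E|^3$ in general.

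The substantive gap is that the entire content of the lemma --- the square-root--cancellation bound $|F|^{\frac{d-2}{2}}|E|^2$ for the off-degenerate count --- is not carried out: you state yourself that naive Cauchy--Schwarz loses a factor of $|F|$ and only ``expect'' that a character-sum computation analogous to Lemma~\ref{lem:MT0} will fix this. The paper supplies the missing mechanisms concretely: it first relaxes $\Lambda(E)$ to counting triples $x,y,z\in E$ with $x-z+y\in P$ and splits according to whether $\underline{z}-\underline{y}\in S_0$; the $S_0$ part is handled by expanding $S_0$ in Fourier and using the explicit Gauss/Kloosterman evaluation of $S_0^{\vee}$ in the odd dimension $n=d-1$ (Lemma~\ref{sphereCoBound}), which is precisely where evenness of $d$ enters (note your heuristic is backwards: $\eta^{d-1}$ is \emph{non}trivial when $d-1$ is odd, and that nontriviality is what produces the extra decay $|F|^{-d/2}$); the complementary part is handled by a Galilean transformation (Lemma~\ref{lem:Gal}) normalizing $y$, the substitution $\underline{b}\mapsto \underline{b}/(\underline{b}\cdot\underline{b})$ (Lemma~\ref{lem:Fnorms}) turning $a-b\in P$ into the bilinear incidence $a\cdot b'=1$, and a completed character sum over the dilation parameter plus Cauchy--Schwarz (Lemma~\ref{Mlem1}), which is exactly the step that yields $|F|^{\frac{d-2}{2}}|A|^{\frac12}|B|^{\frac12}$ without the $|F|$ loss you anticipate. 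Without a worked-out substitute for these steps, your proposal is a plausible plan, not a proof.
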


Assuming this lemma and inserting it into the conclusion of Lemma \ref{lem:MT0} we obtain, identifying sets with their characteristic functions, that
\begin{lemma}\label{lem:MT} Let $g : F^d \rightarrow \mathbb C$ such that $g \sim 1$ on its support $G \subseteq F^d$. Then 
$$ \|\widehat{g}\|_{L^2(P, d\sigma)} \lesssim |G|^{\frac{1}{2}} + |G|^{\frac{3}{4}} |F|^{\frac{2-d}{8}} + |F|^{\frac{6-d}{16}}|G|^{\frac{5}{8}}.$$
\end{lemma}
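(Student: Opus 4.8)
The plan is to substitute the additive energy bound of Lemma \ref{lem:AE} into Lemma \ref{lem:MT0} through the combinatorial identity for the $L^4$ extension norm recalled just above that lemma. Since $g \sim 1$ on $G$ we have $|g| \lesssim 1$ on its support and $|supp(g)| = |G|$, so Lemma \ref{lem:MT0} applies directly and gives
$$ \|\widehat{g}\|_{L^2(P, d\sigma)} \lesssim |G|^{\frac{1}{2}} + |G|^{\frac{3}{8}} |F|^{\frac{d-1}{4}}\left( \sum_{z \in F}\|(G_z d\sigma)^\vee \|_{L^4(F^d)} \right)^{\frac{1}{2}}. $$
For each $z \in F$ the function $G_z$ is the characteristic function of a subset $S_z \subseteq P$ with $|S_z| = |\tilde{G_z}| = \#\{\underline{x} : (\underline{x},z)\in G\}$; these slices satisfy $\sum_{z \in F}|S_z| = |G|$, and at most $|F|$ of them are nonempty.

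First I would bound each term of the sum. By the identity $\|(1_{S_z} d\sigma)^\vee\|_{L^4(F^d)} = |F|^{\frac{4-3d}{4}}(\Lambda(S_z))^{1/4}$, together with Lemma \ref{lem:AE} and $(a+b)^{1/4} \le a^{1/4} + b^{1/4}$, one gets
$$ \|(G_z d\sigma)^\vee\|_{L^4(F^d)} \lesssim |F|^{\frac{4-3d}{4}}\left( |F|^{-1}|S_z|^3 + |F|^{\frac{d-2}{2}}|S_z|^2\right)^{\frac{1}{4}} \lesssim |F|^{\frac{3(1-d)}{4}}|S_z|^{\frac{3}{4}} + |F|^{\frac{6-5d}{8}}|S_z|^{\frac{1}{2}}. $$
Summing over $z$ and applying H\"older's inequality --- in the two terms with exponent pairs $(\tfrac{4}{3},4)$ and $(2,2)$ respectively, using that there are at most $|F|$ nonempty slices and $\sum_z |S_z| = |G|$ --- yields $\sum_z |S_z|^{3/4} \le |F|^{1/4}|G|^{3/4}$ and $\sum_z |S_z|^{1/2} \le |F|^{1/2}|G|^{1/2}$, hence
$$ \sum_{z \in F}\|(G_z d\sigma)^\vee\|_{L^4(F^d)} \lesssim |F|^{\frac{4-3d}{4}}|G|^{\frac{3}{4}} + |F|^{\frac{5(2-d)}{8}}|G|^{\frac{1}{2}}. $$

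Finally I would substitute this into the first display, take square roots while splitting the sum once more via $(a+b)^{1/2} \le a^{1/2}+b^{1/2}$, and collect exponents: the contribution $|G|^{3/8}|F|^{\frac{d-1}{4}}\bigl(|F|^{\frac{4-3d}{4}}|G|^{3/4}\bigr)^{1/2}$ simplifies to $|F|^{\frac{2-d}{8}}|G|^{3/4}$, and $|G|^{3/8}|F|^{\frac{d-1}{4}}\bigl(|F|^{\frac{5(2-d)}{8}}|G|^{1/2}\bigr)^{1/2}$ to $|F|^{\frac{6-d}{16}}|G|^{5/8}$; together with the term $|G|^{1/2}$ this is precisely the claimed inequality. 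I do not anticipate any genuine difficulty here: the argument is a bookkeeping exercise, and the only step meriting care is the H\"older estimate on the slice sum, which is valid exactly because $z$ ranges over the $|F|$ elements of $F$. All of the real content sits in Lemma \ref{lem:AE}, whose proof is carried out in the next section.
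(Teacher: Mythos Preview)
Your proposal is correct and follows essentially the same route as the paper: both start from Lemma~\ref{lem:MT0}, convert the $L^4$ extension norm to the additive energy, insert Lemma~\ref{lem:AE}, and then apply H\"older to bound $\sum_z |S_z|^{3/4}$ and $\sum_z |S_z|^{1/2}$ before collecting exponents. The only difference is cosmetic ordering (you take fourth roots termwise before summing, the paper factors out $|F|^{(4-3d)/8}$ under the square root first), and all your arithmetic checks out.
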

\begin{proof}Starting from Lemma \ref{lem:MT0} and inserting the energy estimate from Lemma \ref{lem:AE} we have:
$$ \|\widehat{g}\|_{L^2(P, d\sigma)} \lesssim |G|^{\frac{1}{2}} + |G|^{\frac{3}{8}} |F|^{\frac{d-1}{4}}\, \left( \sum_{z \in F}||(G_z d\sigma)^\vee ||_{L^4(F^d)} \right)^{\frac{1}{2}}$$
$$ \lesssim |G|^{\frac{1}{2}}  + |G|^{\frac{3}{8}} |F|^{\frac{d-1}{4}} |F|^{\frac{4-3d}{8}} \left( |F|^{-\frac{1}{4}}\sum_{z \in F} |G_z|^{\frac{3}{4}} + |F|^{\frac{d-2}{8}} \sum_{z \in F}|G_z|^{\frac{1}{2}} \right)^{\frac{1}{2}} $$
$$ \lesssim |G|^{\frac{1}{2}}  + |G|^{\frac{3}{8}} |F|^{\frac{d-1}{4}} |F|^{\frac{4-3d}{8}} \left( |F|^{-\frac{1}{4}} |F|^{\frac{1}{4}} \left(\sum_{z \in F} |G_z|\right)^{\frac{3}{4}} + |F|^{\frac{d-2}{8}} |F|^{\frac{1}{2}} \left(\sum_{z \in F} |G_z|\right)^{\frac{1}{2}} \right)^{\frac{1}{2}}$$  
$$\lesssim |G|^{\frac{1}{2}} + |G|^{\frac{3}{4}} |F|^{\frac{2-d}{8}} + |F|^{\frac{6-d}{16}}|G|^{\frac{5}{8}}.$$
\end{proof}

Combining the dominate terms from Lemma \ref{lem:MT}, \eqref{eq:bigL2} and \eqref{eq:decayL1} we obtain the following:

\begin{lemma}\label{lem:CombEstimate}Let $d\ge 4$ is even. If $g :  F^d \rightarrow \mathbb C$ such that $|g| \lesssim 1$ on its support $G$, then
$$\|\widehat{g}\|_{L^2(P, d\sigma)} 
\lesssim \left\{  \begin{array}{ll}|F|^{\frac{1}{2}} |G|^{\frac{1}{2}} \quad&\mbox{for}~~|F|^{\frac{d+2}{2}} \le |G|\le |F|^{d}\\
|F|^{\frac{-d+6}{16}} |G|^{\frac{5}{8}} \quad&\mbox{for}~~ |F|^{\frac{3d+2}{6}} \le |G|\le |F|^{\frac{d+2}{2}}\\
|G|^{\frac{1}{2}} + |F|^{\frac{-d+1}{4}} |G|\quad&\mbox{for}~~ 1 \le |G|\le |F|^{\frac{3d+2}{6}}.\end{array}\right.
$$
\end{lemma}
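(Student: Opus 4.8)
The plan is to read off Lemma \ref{lem:CombEstimate} by combining the three bounds for $\|\widehat g\|_{L^2(P,d\sigma)}$ already available in the excerpt — the crude Parseval bound \eqref{eq:bigL2}, the $L^1$-decay bound \eqref{eq:decayL1}, and the energy-powered bound of Lemma \ref{lem:MT} — and observing that on each of the three indicated ranges of $|G|$ exactly one of these is the smallest. Throughout I will use the hypothesis $|g|\lesssim 1$ on $G$ to replace $\|g\|_{L^2(F^d)}$ by $|G|^{1/2}$ and $\|g\|_{L^1(F^d)}$ by $|G|$, so the three inputs become, respectively, $\|\widehat g\|_{L^2(P,d\sigma)}\lesssim |F|^{1/2}|G|^{1/2}$; $\|\widehat g\|_{L^2(P,d\sigma)}\lesssim |G|^{1/2}+|F|^{(1-d)/4}|G|$; and $\|\widehat g\|_{L^2(P,d\sigma)}\lesssim |G|^{1/2}+|F|^{(2-d)/8}|G|^{3/4}+|F|^{(6-d)/16}|G|^{5/8}$.

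For the top range $|F|^{(d+2)/2}\le |G|\le |F|^d$ the first of these is already the asserted bound $|F|^{1/2}|G|^{1/2}$, with nothing to check; the range restriction only records where this estimate happens to be the best of the three. For the bottom range $1\le |G|\le |F|^{(3d+2)/6}$ the second input is already the asserted bound $|G|^{1/2}+|F|^{(1-d)/4}|G|$, again with nothing to check. The only range that requires an argument is the middle one, $|F|^{(3d+2)/6}\le |G|\le |F|^{(d+2)/2}$, where I would start from Lemma \ref{lem:MT} and show that the first two terms there are dominated by the third on this range: the inequality $|G|^{1/2}\lesssim |F|^{(6-d)/16}|G|^{5/8}$ is equivalent to $|G|\gtrsim |F|^{(d-6)/2}$, which holds since $(3d+2)/6\ge (d-6)/2$, and $|F|^{(2-d)/8}|G|^{3/4}\lesssim |F|^{(6-d)/16}|G|^{5/8}$ is equivalent to $|G|\lesssim |F|^{(d+2)/2}$, the upper endpoint of the range. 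Hence on this range $\|\widehat g\|_{L^2(P,d\sigma)}\lesssim |F|^{(6-d)/16}|G|^{5/8}$, completing the three cases.

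There is no serious obstacle here — this is a bookkeeping lemma, and the only care needed is in the choice of the cut-points, which are precisely the crossover loci of the three input bounds. It is worth recording that $(3d+2)/6<(d+2)/2<d$ for even $d\ge 4$, so the three ranges genuinely exhaust $1\le |G|\le |F|^d$, and that the piecewise bound is continuous across the two interior junctions: $|F|^{1/2}|G|^{1/2}$ and $|F|^{(6-d)/16}|G|^{5/8}$ both equal $|F|^{(d+4)/4}$ at $|G|=|F|^{(d+2)/2}$, while $|F|^{(6-d)/16}|G|^{5/8}$ and $|F|^{(1-d)/4}|G|$ (the dominant part of the bottom bound there) both equal $|F|^{(3d+7)/12}$ at $|G|=|F|^{(3d+2)/6}$. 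If one prefers not to appeal to the $g\sim 1$ normalization in the statement of Lemma \ref{lem:MT}, one may invoke Lemma \ref{lem:MT0} together with Lemma \ref{lem:AE} directly, since those require only $|g|\lesssim 1$; the middle-range computation is otherwise unchanged.
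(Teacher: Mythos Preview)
Your proposal is correct and follows exactly the paper's approach: the paper's proof is literally the one-line remark ``Combining the dominate terms from Lemma \ref{lem:MT}, \eqref{eq:bigL2} and \eqref{eq:decayL1},'' and you have simply filled in the comparison of terms and crossover points that this remark leaves implicit. Your observation about the $g\sim 1$ versus $|g|\lesssim 1$ discrepancy in the hypotheses of Lemma \ref{lem:MT} and Lemma \ref{lem:CombEstimate}, and the workaround via Lemma \ref{lem:MT0} and Lemma \ref{lem:AE}, is a nice bit of care that the paper glosses over.
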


\section{Proof of the Theorem \ref{newthm}}
We now deduce Theorem \ref{newthm} using Lemma \ref{lem:CombEstimate}. We proceed in the dual/restriction (see \eqref{eq:restF}) form. For even $d \geq 8$ it suffices to show $||\widehat{g}||_{L^2(P,d\sigma)} \lesssim 1$ for all $g: F^d \to \mathbb C$ with
\begin{equation}\label{eq:Lnorm}
\sum_{x \in F^d} |g(x)|^{\frac{2d+4}{d+4}} =1. 
\end{equation}
In the case $d=6$ it suffices to show $\|\widehat{g}\|_{L^2(P, d\sigma)} \lesssim \log |F|$ for all $g : F^6 \to \mathbb C$ such that $\sum_{x \in F^6} |g(x)|^{\frac{8}{5}} =1$ using the epsilon-removal/Lemma \ref{lem:epsRem}.  

Let $g_i$ denote the level set of $g$ where the function is of size $\sim 2^{-i}$. In other words, $g_i =  1_{\{x : g (x) \sim 2^{-i}\}}$. From \eqref{eq:Lnorm}, one has that $|supp(g_i)| \leq 2^{i \frac{2d+4}{d+4}}$. Our goal is to bound $\|\widehat{g}\|_{L^2(P, d\sigma)}$. We proceed as follows:

$$ \|\widehat{g}\|_{L^2(P, d\sigma)}  \lesssim \sum_{i=0}^{3 \log |F| } 2^{-i} \|\widehat{g_i}\|_{L^2(P, d\sigma)}$$
$$ \lesssim \sum_{\substack{0 \leq i \leq 3 \log |F|\\ 2^{i \frac{2d+4}{d+4}} \leq  |F|^{\frac{3d+2}{6}}}} 2^{-i} \|\widehat{g_i}\|_{L^2(P, d\sigma)} 
+ \sum_{\substack{0 \leq i \leq 3 \log |F|\\ |F|^\frac{3d+2}{6} \leq 2^{i \frac{2d+4}{d+4}} \leq  |F|^{\frac{d+2}{2}}}}2^{-i} \|\widehat{g_i}\|_{L^2(P, d\sigma)} 
+ \sum_{\substack{0 \leq i \leq 3 \log |F|\\ |F|^{\frac{d+2}{2}} \leq 2^{i \frac{2d+4}{d+4}} \leq  |F|^{d}}} 2^{-i} \|\widehat{g_i}\|_{L^2(P, d\sigma)} $$
$$ =I + II + III.$$
Applying Lemma \ref{lem:CombEstimate} with the estimate $|supp(g_i)| \leq 2^{i \frac{2d+4}{d+4}}$ we have that
$$I \leq  \sum_{\substack{0 \leq i \leq 3 \log |F|\\ 2^{i \frac{2d+4}{d+4}} \leq  |F|^{\frac{3d+2}{6}}}} 2^{-i} \|\widehat{g_i}\|_{L^2(P, d\sigma)} 
\lesssim \sum_{\substack{0 \leq i \leq 3 \log |F|\\ 2^{i \frac{2d+4}{d+4}} \leq  |F|^{\frac{3d+2}{6}}}}  2^{-i}2^{i \frac{2d+4}{2d+8}} + |F|^{\frac{1-d}{4} } 2^{-i} 2^{i \frac{2d+4}{d+4}}
\lesssim  1+\sum_{\substack{0 \leq i \leq 3 \log |F|\\ 2^{i \frac{2d+4}{d+4}}  \lesssim  |F|^{\frac{3d+2}{6}}}}  |F|^{\frac{6-d}{12d+24} }.$$
If $ d = 6$ we may conclude that $I \lesssim \log |F|$ and if $d \geq 8$ and is even then $I \lesssim 1$. Proceeding similarly with $II$ we have
$$II  \lesssim   \sum_{\substack{0 \leq i \leq 3 \log |F|\\ |F|^\frac{3d+2}{6} \leq 2^{i \frac{2d+4}{d+4}} \leq  |F|^{\frac{d+2}{2}}}}  |F|^{\frac{6-d}{16}} 2^{i \frac{d-6}{4d+16}}  \lesssim   |F|^{\frac{6-d}{16}}   \sum_{\substack{0 \leq i \leq 3 \log |F|\\ 0 \leq 2^{i } \leq |F|}}  2^{i \frac{d-6}{16}} . $$
From this we may conclude that for $d=6$ one has $II \lesssim \log |F|$ and if $d \geq 8$ is even then $II \lesssim 1$. Finally, for $III$ we have
$$ III  \lesssim |F|^{\frac{1}{2}}\sum_{\substack{0 \leq i \leq 3 \log |F|\\ |F|^{\frac{d+2}{2}} \leq 2^{i \frac{2d+4}{d+4}} \leq  |F|^{d}}} 2^{i \frac{2d+4}{2d+8} -i}  \lesssim |F|^{\frac{1}{2}}\sum_{\substack{0 \leq i \leq 3 \log |F|\\ |F|^{\frac{d+2}{2}} \leq 2^{i \frac{2d+4}{d+4}} \leq  |F|^{d}}} 2^{-i \frac{4}{2d+8} } \lesssim  
|F|^{\frac{1}{2}} |F|^{-\frac{1}{2}}.$$
So $III \lesssim 1$ for all $d$. This completes the proof for $d \geq 6$. We next consider the case $d=4$. Proceeding as above we set $p=\frac{28}{19},$ assume $\sum_{x \in F^d} |g(x)|^{p} =1,$ and note that $|supp(g_i)| \leq 2^{i p}$. We then may decompose 
$$ \|\widehat{g}\|_{L^2(P, d\sigma)}  \lesssim \sum_{i=0}^{3 \log |F| } 2^{-i} \|\widehat{g_i}\|_{L^2(P, d\sigma)}$$
$$ \lesssim \sum_{\substack{0 \leq i \leq 3 \log |F|\\ 2^{i p} \leq  |F|^{\frac{7}{3}}}} 2^{-i} \|\widehat{g_i}\|_{L^2(P, d\sigma)} 
+ \sum_{\substack{0 \leq i \leq 3 \log |F|\\ |F|^{\frac{7}{3}} \leq 2^{i p} \leq  |F|^{3}}}2^{-i} \|\widehat{g_i}\|_{L^2(P, d\sigma)} 
+ \sum_{\substack{0 \leq i \leq 3 \log |F|\\ |F|^{3} \leq 2^{i p} \leq  |F|^{4}}} 2^{-i} \|\widehat{g_i}\|_{L^2(P, d\sigma)} $$
$$ =I + II + III.$$
We estimate
$$ I \lesssim \sum_{\substack{0 \leq i \leq 3 \log |F|\\ 2^{i p} \leq  |F|^{\frac{7}{3}}}} \left( 2^{-i} |g_i|^{\frac{1}{2}} + 2^{-i}|F|^{-\frac{3}{4}}|g_i| \right)  \lesssim \sum_{\substack{0 \leq i \leq 3 \log |F|\\ 2^{i p} \leq  |F|^{\frac{7}{3}}}} \left( 2^{-i} 2^{i\frac{p}{2}} + |F|^{-\frac{3}{4}}2^{-i}  2^{i p} \right) \lesssim 1.$$
$$II \lesssim  \sum_{\substack{0 \leq i \leq 3 \log |F|\\ |F|^{\frac{7}{3}} \leq 2^{i p} \leq  |F|^{3}}}2^{-i} |F|^{\frac{1}{8}}|g_i|^{\frac{5}{8}} \lesssim 
\sum_{\substack{0 \leq i \leq 3 \log |F|\\ |F|^{\frac{7}{3}} \leq 2^{i p} \leq  |F|^{3}}}|F|^{\frac{1}{8}}2^{-i} 2^{\frac{5ip}{8}} \lesssim 1.$$
$$III \lesssim \sum_{\substack{0 \leq i \leq 3 \log |F|\\ |F|^{3} \leq 2^{i p} \leq  |F|^{4}}} |F|^{\frac{1}{2}} 2^{-i} |g_i|^{\frac{1}{2}}  \lesssim \sum_{\substack{0 \leq i \leq 3 \log |F|\\ |F|^{3} \leq 2^{i p} \leq  |F|^{4}}} |F|^{\frac{1}{2}} 2^{-i} 2^{\frac{ip}{2}} \lesssim 1.  $$
This completes the proof.

\section{Additive Energy Estimates}
In this section, we complete the proof of the theorem by verifying the additive energy estimate stated above, namely 
\begin{lemma}\label{lem:AE} Let $P\subset  F^d$ be the $d$-dimensional paraboloid.
If $d\ge 4$ is even  and $E\subseteq  P$, then we have
$$ \Lambda(E)=\sum_{\substack{x,y,z,w\in E \\ x+y=z+w}} 1 \lesssim  |F|^{-1}|E|^3 + |F|^{\frac{d-2}{2}} |E|^2.$$
\end{lemma}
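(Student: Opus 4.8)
The plan is to count solutions to $x+y=z+w$ with all four points on the paraboloid $P=\{(\underline{x},\underline{x}\cdot\underline{x})\}$. Writing $x=(\underline{a},\underline{a}\cdot\underline{a})$, etc., the condition $x+y=z+w$ splits into the linear equation $\underline{a}+\underline{b}=\underline{c}+\underline{d}$ in $F^{d-1}$ together with the quadratic equation $\underline{a}\cdot\underline{a}+\underline{b}\cdot\underline{b}=\underline{c}\cdot\underline{c}+\underline{d}\cdot\underline{d}$. Fixing the common sum $\underline{a}+\underline{b}=\underline{c}+\underline{d}=\underline{s}$ and writing $\underline{a}=\tfrac{\underline{s}}{2}+\underline{u}$, $\underline{b}=\tfrac{\underline{s}}{2}-\underline{u}$, $\underline{c}=\tfrac{\underline{s}}{2}+\underline{v}$, $\underline{d}=\tfrac{\underline{s}}{2}-\underline{v}$, the quadratic constraint collapses (the cross terms with $\underline{s}$ cancel) to the clean condition $\underline{u}\cdot\underline{u}=\underline{v}\cdot\underline{v}$. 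So $\Lambda(E)$ counts quadruples of points of $E$ whose "difference vectors" $\underline{u},\underline{v}$ from the midpoint lie on a common sphere $\{\underline{u}\cdot\underline{u}=\lambda\}$ in $F^{d-1}$. This is the standard reduction; the key point is that the problem becomes: for each sphere radius $\lambda\in F$, bound the number of representations of elements of $E$ as midpoint-plus-sphere-point.

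The main tool will be a bound on how many pairs of points of $E$ have a prescribed midpoint and prescribed "squared half-difference" $\lambda$; summing over $\lambda$ and applying Cauchy–Schwarz in the midpoint variable will give $\Lambda(E)$. Concretely, set, for $\underline{s}\in F^{d-1}$ and $\lambda\in F$,
$$ \nu_\lambda(\underline{s}) = \#\{(\underline{a},\underline{b})\in \pi(E)^2 : \underline{a}+\underline{b}=\underline{s},\ (\tfrac{\underline{a}-\underline{b}}{2})\cdot(\tfrac{\underline{a}-\underline{b}}{2})=\lambda\}, $$
where $\pi$ is the projection to the first $d-1$ coordinates (a bijection of $P$ onto $F^{d-1}$, so $|\pi(E)|=|E|$). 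Then $\Lambda(E)=\sum_{\lambda}\sum_{\underline{s}}\nu_\lambda(\underline{s})^2$, and $\sum_{\underline{s},\lambda}\nu_\lambda(\underline{s})=|E|^2$. The crude bound $\nu_\lambda(\underline{s})\le |E|$ gives nothing; instead I will estimate $\sum_{\underline{s}}\nu_\lambda(\underline{s})^2$ for each fixed $\lambda$ as the number of solutions to $\underline{a}+\underline{b}=\underline{c}+\underline{d}$ with $(\underline{a}-\underline{b})\cdot(\underline{a}-\underline{b})=(\underline{c}-\underline{d})\cdot(\underline{c}-\underline{d})=4\lambda$. Writing this out with the Fourier expansion of the $\delta$ enforcing $\underline{a}+\underline{b}=\underline{c}+\underline{d}$, and the Gauss-sum identity $(d\sigma)^\vee$ computation already recorded in \eqref{eq:evaulation}, one expresses the count in terms of Kloosterman/Gauss sums whose modulus is controlled because $d-1$ is odd (here evenness of $d$ enters decisively: $\mathcal G(t)^{d-1}$ has a clean square-root-cancellation shape). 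The diagonal term (from the trivial frequency) contributes the main term $|F|^{-1}|E|^3$ after summing over $\lambda$, and the off-diagonal oscillatory term contributes $|F|^{(d-2)/2}|E|^2$; the balance of these two is exactly the claimed bound.

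I expect the main obstacle to be the off-diagonal estimate: bounding the oscillatory sum over the nonzero frequency $\underline{\xi}$ and over $x_d$-type variables uniformly enough that, after summing over the sphere radius $\lambda$, one still only loses $|F|^{(d-2)/2}|E|^2$ rather than a larger power of $|F|$. This requires carefully exploiting the square-root cancellation in $\mathcal G(x_d)^{d-1}$ together with an $L^2$/Cauchy–Schwarz step that trades the sum over $E$-points for $|E|^2$ without incurring extra factors, and it is where the restriction $d$ even (so $d-1$ odd, making the relevant Gauss sum a genuine non-degenerate Gauss sum rather than $\pm|F|$) is used. Once that bound is in hand, the two pieces $|F|^{-1}|E|^3$ and $|F|^{(d-2)/2}|E|^2$ are simply added and the lemma follows.
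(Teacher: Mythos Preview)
Your midpoint reduction is correct but does not advance the problem: under the constraint $\underline{a}+\underline{b}=\underline{c}+\underline{d}=\underline{s}$ one has $\|\underline{a}-\underline{b}\|^2-\|\underline{c}-\underline{d}\|^2=2(\underline{a}\cdot\underline{a}+\underline{b}\cdot\underline{b}-\underline{c}\cdot\underline{c}-\underline{d}\cdot\underline{d})$, so the condition $\underline{u}\cdot\underline{u}=\underline{v}\cdot\underline{v}$ is \emph{identical} to the original paraboloid constraint. In other words, your formula $\Lambda(E)=\sum_{\lambda,\underline{s}}\nu_\lambda(\underline{s})^2$ is a tautology, and the ``off-diagonal estimate'' you defer is the full content of the lemma. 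You then assert that Fourier-expanding the linear $\delta$ and invoking \eqref{eq:evaulation} produces Kloosterman/Gauss sums with square-root cancellation; but \eqref{eq:evaulation} is the \emph{paraboloid} kernel, whereas after your reduction the relevant objects are spheres in $F^{d-1}$, whose Fourier coefficients are governed by Lemma~\ref{sphereCoBound} instead. More seriously, no mechanism is given for why the oscillatory piece should be $O(|F|^{(d-2)/2}|E|^2)$ rather than, say, $O(|F|^{(d-1)/2}|E|^2)$; a naive Cauchy--Schwarz on the nonzero frequencies recovers only Stein--Tomas-strength information. The gap is real: the lemma is the new ingredient of the paper precisely because this off-diagonal bound is not a routine Gauss-sum computation.

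The paper's argument is structurally different and supplies the missing idea. It bounds $\Lambda(E)\le \sum_{x,y,z\in E:\,x-z+y\in P}1$ and splits according to whether $\underline{z}-\underline{y}\in S_0$. The $S_0$-piece $M_1$ is handled by discarding the $P$-constraint and using the sharp decay $|S_0^\vee(\alpha)|\lesssim |F|^{-d/2}$ for $\alpha\neq 0$ in odd dimension $d-1$ (Lemma~\ref{sphereCoBound}); this is where evenness of $d$ enters. The complementary piece $M_2$ is the heart of the matter: a Galilean transformation $G_{-y}$ (Lemma~\ref{lem:Gal}) moves $y$ to the origin, after which the count becomes $\#\{(a,b)\in A\times B: a-b\in P\}$ with $B\subseteq P\setminus(S_0\times\{0\})$. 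The key trick (Lemma~\ref{Mlem1}) is the normalization $\underline{b}\mapsto \underline{b}/(\underline{b}\cdot\underline{b})$, which is injective off $S_0$ and converts ``$a-b\in P$'' into the \emph{linear} incidence condition $\underline{a}\cdot\underline{b}'=1$; a Cauchy--Schwarz plus orthogonality argument then gives the sharp bound $|F|^{-1}|A||B|+|F|^{(d-2)/2}|A|^{1/2}|B|^{1/2}$. This inversion step is absent from your outline and is what actually produces the exponent $(d-2)/2$.
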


The proof of this result will be Fourier analytic and has similarities to arguments used previously in \cite{IK09, IK10, IR07}. As above, we will parameterize elements of $x \in P$ as $x=(\underline{x},\underline{x} \cdot \underline{x})$ and use $\underline{x}$ to denote the projection of $x$ onto $F^{d-1}$. Similarly if $E \subseteq P$ we will denote the set obtained by projecting the elements of $E$ onto $F^{d-1}$ as $\underline{E}$.

We start by decomposing the sum defining the additive energy into two components. These components will be distinguished based on the relationship of certain pairs of points to $j$-spheres. We define a $j$-sphere $S_j$ in $ F^{d-1}$ as
$$S_j := \{x \in F^{d-1}:x\cdot x = j\}.$$
We will discuss certain properties of these spheres below. We may now start the proof of Lemma \ref{lem:AE}. Let $E\subseteq P \subset F^d.$

\begin{align*} \Lambda(E)&=\sum_{\substack{x,y,z,w\in E\\x+y=z+w}}1
\le \sum_{\substack{x,y,z\in E\\ x-z+y\in P}}1\\
&= \sum_{\substack{x,y,z\in E\\ x-z+y\in P,\\ \underline{z}-\underline{y}\in S_0}}1+ \sum_{\substack{x,y,z\in E\\ x-z+y\in P,\\ \underline{z} -\underline{y} \notin S_0}}1 :={M_1} + {M_2}.\end{align*}

The proof will be completed by bounding $M_1$ and $M_2$. These will be considered in the following two subsections, respectively.

\subsection{Bounding $M_1$}
We start by estimating
$$M_1 := \sum_{\substack{x,y,z\in E\\ x-z+y\in P,\\ \underline{z}-\underline{y}\in S_0}}1  \leq \sum_{\substack{x,y,z\in E \\ \underline{z}-\underline{y}\in S_0}}1= |E| \sum_{\substack{\underline{y},\underline{z} \in F^{d-1}}} \underline{E}(\underline{y})\underline{E}(\underline{z})S_{0}(\underline{z}-\underline{y}).$$
We will proceed by expanding the characteristic function of $S_0$ in a Fourier series 
$$S_{0}(\alpha) = \sum_{m \in F^{d-1}} S_{0}^{\vee}(m) e(-m\cdot\alpha).$$
This gives us that 
$$M_1 \leq |E| \sum_{\substack{\underline{y},\underline{z} \in F^{d-1}}} \underline{E}(\underline{y})\underline{E}(\underline{z})\sum_{m \in F^{d-1}} S_{0}^{\vee}(m) e(-m\cdot(\underline{z}-\underline{y}))$$
$$= |E| |F|^{2d-2} \sum_{m \in F^{d-1}} S_{0}^{\vee}(m)|\underline{E}^{\vee}(m)|^2$$
\begin{equation}\label{eq:M1}= |E| |F|^{2d-2}S_{0}^{\vee}(\mathbf{0})|\underline{E}^{\vee}(\mathbf{0})|^2   +|E| |F|^{2d-2} \sum_{\substack{m \in F^{d-1} \\ m \neq \mathbf{0}}} S_{0}^{\vee}(m)|\underline{E}^{\vee}(m)|^2.\end{equation}
The function $S_{0}^{\vee}(m)$ can be explicitly evaluated in terms of Gauss and Kloosterman sums. Denoting $F_{\times}$ to be the set of non-zero elements of  $F$, $e(\cdot)$ a nontrivial additive character, and $\eta$ the multiplicative quadratic character on $F_\times$. We then define the Gauss sum $\mathcal{G}(\eta):= \sum_{s \in F_{\times}} \eta(s)e(s)$ and twisted Kloosterman sum $TK(a,b):= \sum_{s \in F_{\times}} \eta(s) e(as+bs^{-1})$. It is well known (see, for example, \cite{LN97, IK04}) that: 
\begin{equation}\label{AbsExp}
|\mathcal{G}(\eta)|=|F|^{\frac{1}{2}} \quad \mbox{and}\quad |TK(a,b)|\le 2 |F|^{\frac{1}{2}}.
\end{equation}
Moreover, the inverse Fourier transform of the $n$-dimensional $j$-sphere, $S_j$, is explicitly calculated (see, for example, Lemma 4 in \cite{IK10}), as follows:
$$S_j^\vee(\alpha):=\frac{1}{|F|^{n}}\sum_{\xi\in S_j} e(\alpha\cdot \xi)$$
\begin{equation} =|F|^{-1} \delta(\alpha) + |F|^{-n-1}\,\eta^{n}(-1)\, \mathcal{G}^{n}(\eta) \sum_{r \in F_\times} 
\eta^{n}(r)\,e\Big(jr+ \frac{\alpha \cdot \alpha}{4r}\Big).
\end{equation}
Estimating the inner exponential sums using \eqref{AbsExp} gives the following.
\begin{lemma}\label{sphereCoBound} Let $S_0= \{x \in F^n:x\cdot x =0 \}$ be the $0$-sphere in $F^n$. If $n\ge 3$ is odd, then we have
$$\left|S_0^\vee(\alpha)\right|\lesssim |F|^{-\frac{(n+1)}{2}} \quad \mbox{for}~~\alpha \neq \mathbf{0} $$
and $\left|S_0^\vee(\mathbf{0})\right|\sim |F|^{-1}.$ If $n\ge 4$ is even, then we have
$$\left|S_0^\vee(\alpha)\right|\lesssim |F|^{-\frac{n}{2}} \quad \mbox{for}~~\alpha \neq \mathbf{0}$$
and $\left|S_0^\vee(\mathbf{0})\right|\sim |F|^{-1}.$
\end{lemma}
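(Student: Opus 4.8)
The plan is to read off the bound directly from the closed-form evaluation of $S_j^\vee(\alpha)$ quoted just above the lemma, specialized to $j=0$. Setting $j=0$ there, the exponential-sum term loses its $e(jr)$ factor, so $S_0^\vee(\alpha) = |F|^{-1}\delta(\alpha) + |F|^{-n-1}\,\eta^n(-1)\,\mathcal{G}^n(\eta)\, T(\alpha)$ with $T(\alpha) := \sum_{r\in F_\times}\eta^n(r)\, e\!\left(\tfrac{\alpha\cdot\alpha}{4r}\right)$. Since $|\mathcal{G}(\eta)| = |F|^{1/2}$ by \eqref{AbsExp}, the coefficient in front of $T(\alpha)$ has modulus $|F|^{-n/2-1}$, so everything comes down to estimating $|T(\alpha)|$. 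Here I would use that $\eta$ is the quadratic character, hence $\eta^n$ on $F_\times$ is the quadratic character when $n$ is odd and is identically $1$ when $n$ is even; this parity split is exactly what produces the two different exponents in the statement.

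The main step is to evaluate $T(\alpha)$ via the substitution $r\mapsto r^{-1}$, which — using that $\eta^n$ is $\pm1$-valued, so $\eta^n(r^{-1})=\eta^n(r)$ — rewrites $T(\alpha)=\sum_{r\in F_\times}\eta^n(r)\,e\!\left(\tfrac{\alpha\cdot\alpha}{4}r\right)$. For $n$ odd this is a Gauss sum in disguise: it vanishes when $\alpha\cdot\alpha=0$ and has modulus exactly $|\mathcal{G}(\eta)|=|F|^{1/2}$ otherwise, so $|T(\alpha)|\le|F|^{1/2}$ always. For $n$ even it collapses to $\sum_{r\in F_\times}e\!\left(\tfrac{\alpha\cdot\alpha}{4}r\right)$, which equals $|F|-1$ when $\alpha\cdot\alpha=0$ and $-1$ otherwise, so $|T(\alpha)|\le|F|$ always. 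Combining with the modulus $|F|^{-n/2-1}$ of the coefficient: for $\alpha\neq\mathbf{0}$ the $\delta$-term drops out, so $|S_0^\vee(\alpha)|\le|F|^{-n/2-1}|T(\alpha)|$, which is $\lesssim|F|^{-(n+1)/2}$ for $n$ odd and $\lesssim|F|^{-n/2}$ for $n$ even, as claimed. For $\alpha=\mathbf{0}$ we have $\alpha\cdot\alpha=0$, so $T(\mathbf{0})=0$ when $n$ is odd (whence $S_0^\vee(\mathbf{0})=|F|^{-1}$ exactly) and $T(\mathbf{0})=|F|-1$ when $n$ is even (whence the second term has size $\sim|F|^{-n/2}$); in either parity $|S_0^\vee(\mathbf{0})|\sim|F|^{-1}$.

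There is no serious obstacle — the content is entirely in the already-quoted formula for $S_j^\vee$ together with $|\mathcal{G}(\eta)|=|F|^{1/2}$ — but two points deserve care. First, in the even-dimensional case one cannot claim square-root cancellation for the inner sum over $r$; the honest bound is the trivial $|T(\alpha)|\le|F|$, and one should verify this still yields the stated exponent $-n/2$, which it does because the $|F|^{n/2}$ from $\mathcal{G}^n(\eta)$ is paired against the $|F|^{-n-1}$ prefactor. Second, for the $\alpha=\mathbf{0}$ statement in the even case one needs the Gauss-sum term, of size $\sim|F|^{-n/2}$, to be dominated by the $|F|^{-1}$ coming from $\delta(\alpha)$, which is precisely where the hypothesis $n\ge4$ is used (for $n$ odd the corresponding identity is exact, so no such comparison is needed). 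Carrying along the $\eta^n(-1)$ and $\mathcal{G}^n(\eta)$ phase factors is unnecessary for the absolute-value bounds but makes this last comparison transparent.
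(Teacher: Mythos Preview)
Your proposal is correct and follows the same route as the paper, which simply says to estimate the inner sum $\sum_{r\in F_\times}\eta^n(r)\,e\!\bigl(jr+\tfrac{\alpha\cdot\alpha}{4r}\bigr)$ using the Gauss/Kloosterman bounds in \eqref{AbsExp}. Your write-up is in fact slightly sharper in that, after the substitution $r\mapsto r^{-1}$, you recognize the odd-$n$ inner sum at $j=0$ as an honest Gauss sum (so the twisted Kloosterman bound from \eqref{AbsExp} is not needed), and you correctly note that in the even-$n$ case there is no square-root cancellation and one must use the trivial bound $|T(\alpha)|\le |F|$; the paper's one-line proof leaves both of these details implicit.
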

Applying Lemma \ref{sphereCoBound} to \eqref{eq:M1} for odd dimensions (since $d$ is even) and the identity $\sum_{m \in F^{d-1}} |\underline{E}^\vee(m) |^2 = |F|^{-(d-1)}|E|$  we have that
$$M_1 \lesssim |E| |F|^{2d-2}\times |F|^{-1} \times \left(|F|^{-(d-1)} |E|\right)^{2} + |E|  |F|^{2d-2} \times |F|^{-d/2} \times|F|^{-(d-1)} |E|$$
$$\lesssim |F|^{-1}|E|^3 + |F|^{\frac{d-2}{2}}|E|^2.$$
This completes the estimate of $M_1$.

\subsection{Bounding $M_2$}
We start by estimating
\begin{equation}\label{eq:M2}M_2:=\sum_{\substack{x,y,z\in E\\ x-z+y\in P,\\ \underline{z}- \underline{y} \notin S_0}} 1= \sum_{y\in E} \left( \sum_{\substack{x,z\in E\\ x-z+y\in P,\\ \underline{z} -\underline{y} \notin S_0}} 1\right).
\end{equation}
Next we will perform a Galilean transformation which will allow us to simplify the inner sum. Given $y=(\underline{y}, y_d)\in P$ we define the Galilean transformation $G_{-y}:P \to P$ by
$$ G_{-y}(\underline{z}, z_d) = (\underline{z}-\underline{y},\, z_d-2\underline{z}\cdot \underline{y}+ \underline{y}\cdot \underline{y}).$$
The following two properties of $G_{-y}$ make it useful.
\begin{lemma}\label{lem:Gal}Let $x,z,y \in P$ we have that
$$x-z+y\in P \quad\mbox{if and only if}\quad G_{-y}(x)-G_{-y}(z)\in P.$$
$$\underline{z}-\underline{y} \notin S_0 \quad\mbox{if and only if}\quad G_{-y}(z) \notin S_0\times\{0\} .$$
\end{lemma}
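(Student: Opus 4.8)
I would prove Lemma~\ref{lem:Gal} by direct computation, the key observation being that a point of $P$ has the form $(\underline w,\underline w\cdot\underline w)$ and that $G_{-y}$ is secretly an affine map of $F^d$ which preserves $P$ and sends $y$ to the origin.

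First I would record the action of $G_{-y}$ on $P$. If $z=(\underline z,\underline z\cdot\underline z)\in P$, then substituting $z_d=\underline z\cdot\underline z$ into the definition gives $G_{-y}(z)=(\underline z-\underline y,\ \underline z\cdot\underline z-2\underline z\cdot\underline y+\underline y\cdot\underline y)=(\underline z-\underline y,\ (\underline z-\underline y)\cdot(\underline z-\underline y))$, so in particular $G_{-y}(z)\in P$ and $G_{-y}(y)=\mathbf 0$. Moreover $G_{-y}\colon F^d\to F^d$ is an affine map (its last coordinate $z_d-2\underline z\cdot\underline y+\underline y\cdot\underline y$ is an affine function of $(\underline z,z_d)$), a one-line check shows $G_{y}\circ G_{-y}=\mathrm{id}$, and $G_y(\underline z,\underline z\cdot\underline z)=(\underline z+\underline y,(\underline z+\underline y)\cdot(\underline z+\underline y))\in P$; hence $G_{-y}$ is a bijection of $F^d$ with $G_{-y}(P)=P$, so that $w\in P\iff G_{-y}(w)\in P$ for every $w\in F^d$.

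The second equivalence is then immediate: by the formula above $G_{-y}(z)\in P$, and a point $(\underline w,\underline w\cdot\underline w)\in P$ lies in $S_0\times\{0\}$ exactly when $\underline w\cdot\underline w=0$, i.e.\ when $\underline w\in S_0$; applying this with $\underline w=\underline z-\underline y$ gives $G_{-y}(z)\in S_0\times\{0\}\iff\underline z-\underline y\in S_0$, and one negates. For the first equivalence I would use the affine structure rather than a long expansion: writing $G_{-y}(w)=Lw+b$ with $L$ linear, affinity gives $G_{-y}(x)-G_{-y}(z)=L(x-z)$ and $G_{-y}(x-z+y)=L(x-z)+Ly+b=L(x-z)+G_{-y}(y)=L(x-z)=G_{-y}(x)-G_{-y}(z)$, so taking $w=x-z+y$ in $w\in P\iff G_{-y}(w)\in P$ yields $x-z+y\in P\iff G_{-y}(x)-G_{-y}(z)\in P$. (Alternatively one can simply expand the two membership conditions ``$d$-th coordinate $=$ squared norm of the first $d-1$ coordinates'' by hand; each reduces, after simplification, to the single symmetric identity $(\underline z-\underline x)\cdot(\underline z-\underline y)=0$, a more computational but equally short route.)

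I do not expect a genuine obstacle here: the lemma is a short sequence of polynomial identities, and the only thing worth isolating is the observation that $G_{-y}$ extends to an affine bijection of $F^d$ preserving $P$ with $G_{-y}(y)=\mathbf 0$ — this is the conceptual content of the statement and makes the first equivalence fall out immediately.
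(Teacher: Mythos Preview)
Your proof is correct and, for the second claim, identical in spirit to the paper's one-line remark that it ``follows immediately from the definition of $G_{-y}(z)$.'' For the first claim the paper simply says ``directly computing and comparing the algebraic equations implied by each side'' (deferring to Lemma~44 of \cite{Le14}), i.e.\ the brute-force expansion you mention as an alternative at the end. Your primary argument is a cleaner packaging of the same content: by observing once that $G_{-y}$ extends to an affine bijection of $F^d$ with $G_{-y}(P)=P$ and $G_{-y}(y)=\mathbf{0}$, the identity $G_{-y}(x-z+y)=G_{-y}(x)-G_{-y}(z)$ and hence the equivalence drop out without any further computation. This buys a more transparent explanation of \emph{why} the Galilean shift works (it is a symmetry of $P$ fixing the origin of the shifted frame), whereas the paper's approach is a purely mechanical polynomial check; neither is deeper, but yours is the version one would want to remember.
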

\begin{proof}The first claim can be verified by directly computing and comparing the algebraic equations implied by each side of the equivalence. This is done in detail in the proof of Lemma 44 of \cite{Le14}. The second claim follows immediately from the definition of $G_{-y}(z)$.
\end{proof}
We will also need the following elementary fact.
\begin{lemma}\label{lem:Fnorms}Let $S_0$ denote the $n$-dimensional $0$-sphere and $x, y \in F^{n}\setminus S_0$. Then $x = y$   if and only if  $\frac{x}{x\cdot x} = \frac{y}{y\cdot y}.$
\end{lemma}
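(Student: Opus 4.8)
The plan is to prove the two implications directly by elementary algebra in $F$, using only that $x\cdot x$ and $y\cdot y$ are invertible scalars. The forward implication is immediate: if $x=y$ then $x\cdot x=y\cdot y$, and since $x\notin S_0$ this common value lies in $F_\times$, so scaling the vector identity $x=y$ by its inverse yields $\frac{x}{x\cdot x}=\frac{y}{y\cdot y}$.

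For the converse, the first step is to extract proportionality: the hypothesis is the vector equation $(x\cdot x)^{-1}x=(y\cdot y)^{-1}y$, and multiplying through by the nonzero scalar $x\cdot x$ gives $x=\lambda y$ with $\lambda:=(x\cdot x)(y\cdot y)^{-1}\in F_\times$. The second step is to feed this back into the quadratic form: from $x=\lambda y$ we get $x\cdot x=\lambda^2\,(y\cdot y)$, and dividing by $y\cdot y\neq 0$ gives $\lambda=(x\cdot x)(y\cdot y)^{-1}=\lambda^2$. Hence $\lambda(\lambda-1)=0$ in the field $F$, and since $\lambda\neq 0$ we conclude $\lambda=1$, i.e. $x=y$.

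The only point needing care — and it is precisely where the hypothesis $x,y\notin S_0$ enters — is that one must know $x\cdot x$ and $y\cdot y$ are units of $F$, both so that the ratios $\frac{x}{x\cdot x}$, $\frac{y}{y\cdot y}$ are defined and so that these scalars may be cancelled; over a general finite field the form $u\mapsto u\cdot u$ is isotropic (which is exactly why $S_0$ can be nontrivial), so excluding the null cone is essential and the statement would fail on it (e.g.\ $x$ and $2x$ for isotropic $x$, in odd characteristic). Beyond this observation the argument is a two-line field computation valid in any characteristic, so I do not anticipate a genuine obstacle; the content of the lemma is just that $u\mapsto \frac{u}{u\cdot u}$ is injective on $F^n\setminus S_0$, which is what the counting argument for $M_2$ will need.
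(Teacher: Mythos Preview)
Your proof is correct and follows essentially the same route as the paper's: both show the proportionality $x=\lambda y$ (resp.\ $y=tx$) from the hypothesis, substitute into the quadratic form to obtain $\lambda=\lambda^2$ (resp.\ $t=1$), and conclude. The only cosmetic difference is that the paper's scalar $t=(y\cdot y)/(x\cdot x)$ is the reciprocal of your $\lambda$.
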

\begin{proof}Clearly $x = y$ implies $\frac{x}{x\cdot x} = \frac{y}{y\cdot y}$. Conversely, suppose that $\frac{x}{x\cdot x} = \frac{y}{y\cdot y}$ for $x, y \in F^{n}\setminus S_0$. Letting $t:=\frac{y\cdot y}{x\cdot x} \neq 0$ we have that $y=tx$. This implies 
$$\frac{x}{x\cdot x} = \frac{y}{y\cdot y}= \frac{tx}{t^2 (x\cdot x)}$$ 
which, in turn, implies $t=1$ and thus $x=y$ which completes the proof.
\end{proof}

Applying the Lemma \ref{lem:Gal} to \eqref{eq:M2} we see that
$$M_2 = \sum_{y\in E} \left( \sum_{\substack{x,z\in E\\ x-z+y\in P,\\ \underline{z} -\underline{y}\notin S_0}}1\right) 
= \sum_{y\in E} \left( \sum_{\substack{G_{-y}(x),\,G_{-y}(z)\in G_{-y}(E)\\ G_{-y}(x)-G_{-y}(z)\in P,\\ G_{-y}(z) \notin S_0\times \{0\}}} 1\right).$$
For each fixed $y\in E$, let $G_{-y}(E)=E_y$ and $E_y\setminus (S_0\times \{0\}) := E_y^{*}.$ It follows that
$$ M_2=\sum_{y\in E} \left( \sum_{\substack{a\in E_y,\, b\in E_y^{*}\\ a-b\in P}}1\right) \le |E| \max_{y\in E}\left( \sum_{\substack{a\in E_y,\, b\in E_y^{*}\\ a-b\in P}}1\right). $$

The desired bound on $M_2$ is now reduced to the following lemma.

\begin{lemma}\label{Mlem1}
Let $S_0$ denote the $0$-sphere in $F^{d-1}$. If $A\subseteq P $ and $B\subseteq P\setminus (S_0\times \{0\}),$  we have
$$\sum_{\substack{x\in A, y\in B\\ x-y\in P}}1 \le |F|^{-1} |A||B| + |F|^{\frac{d-2}{2}} |A|^{\frac{1}{2}}  |B|^{\frac{1}{2}}.$$ \end{lemma}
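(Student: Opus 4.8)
The plan is to count the solutions to $x - y \in P$ with $x \in A$, $y \in B$ by introducing a Fourier-analytic indicator of the event that a point of $F^d$ lies on the paraboloid, and then exploit the explicit evaluation \eqref{eq:evaulation} of $(d\sigma)^\vee$ together with the restriction $B \cap (S_0\times\{0\}) = \emptyset$ to control the resulting exponential sums. Concretely, write $N := \sum_{x\in A,\, y\in B,\, x-y\in P} 1$. Since $x,y\in P$, the point $x-y=(\underline x - \underline y,\ \underline x\cdot\underline x - \underline y\cdot\underline y)$ lies in $P$ precisely when its last coordinate equals the squared norm of its first $d-1$ coordinates, i.e. when $\underline x\cdot\underline x - \underline y\cdot \underline y = (\underline x - \underline y)\cdot(\underline x - \underline y)$, equivalently $2\,\underline x\cdot\underline y = 2\,\underline y\cdot\underline y$, i.e. $\underline y\cdot(\underline x - \underline y) = 0$. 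So $N$ counts pairs with this orthogonality-type relation; I would detect it with the additive character: $N = |F|^{-1}\sum_{x\in A}\sum_{y\in B}\sum_{t\in F} e\!\big(t\,\underline y\cdot(\underline x-\underline y)\big)$. Separating the $t=0$ term gives the main term $|F|^{-1}|A||B|$, and it remains to bound the contribution $\mathcal E$ of $t\neq 0$.

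For the error term, after expanding I would write $\mathcal E = |F|^{-1}\sum_{t\neq 0}\sum_{\underline y\in\underline B} \underline B(\underline y)\, e(-t\,\underline y\cdot\underline y)\,\widehat{\underline A}\big(-t\underline y\big)$ (with appropriate normalization), and then apply Cauchy–Schwarz in the $\underline y$ variable to pull out $|B|^{1/2}$ and reduce to estimating $\sum_{t\neq 0}\sum_{\underline y}\underline B(\underline y)|\widehat{\underline A}(t\underline y)|^2$. Expanding the square in terms of $\underline A$ and summing the character sum over $t\in F_\times$ one is led to count, for pairs $a,a'\in\underline A$, the number of $\underline y\in\underline B$ with $\underline y\cdot(\underline a-\underline a')=0$ — again a sphere/hyperplane incidence — together with an error governed by Gauss-sum cancellation. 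The hypothesis $\underline y\notin S_0$ for $y\in B$ is exactly what guarantees a genuine gain here: it is the degenerate locus $\underline y\cdot\underline y = 0$ on which the relevant change of variables (dividing by $\underline y\cdot\underline y$, cf. Lemma \ref{lem:Fnorms}) and the Kloosterman-sum estimate \eqref{AbsExp} would break down, and removing it lets me invoke the square-root cancellation $|TK(a,b)|\le 2|F|^{1/2}$. Carrying this through should produce the bound $|\mathcal E| \lesssim |F|^{\frac{d-2}{2}}|A|^{1/2}|B|^{1/2}$, matching the claim.

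An alternative, cleaner route — and probably the one I would actually write up — is to apply a Galilean/change-of-variables normalization to send a distinguished point of $A$ (or of $B$) to the origin, reducing $x-y\in P$ to a bilinear condition of the form $\underline x\cdot \underline y = 0$, and then recognize $N$ as a sum over dilates of the $0$-sphere: $N \le \sum_{\underline y\in\underline B}\#\{\underline x\in\underline A : \underline x\cdot\underline y=0\}$-type expressions, which are controlled by the Fourier decay of spheres from Lemma \ref{sphereCoBound}. The point $\underline y\notin S_0$ ensures the orthogonal complement of $\underline y$ with the paraboloid constraint is a genuine paraboloid of one lower dimension (not a cone), so its extension operator has the expected $L^4$ decay. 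In either approach the heart of the matter is the same: bound $\sum_{t\neq 0}\sum_{\underline y\in\underline B}|\widehat{\underline A}(t\underline y)|^2$, and the main obstacle is ensuring that the exponential sums one meets are nondegenerate so that \eqref{AbsExp} applies with the full square-root saving; this is precisely where the exclusion of the isotropic sphere $S_0\times\{0\}$ from $B$ is used, and tracking its role carefully through the computation is the delicate part.
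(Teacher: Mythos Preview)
Your reduction to the condition $\underline y\cdot(\underline x-\underline y)=0$ and the extraction of the main term $|F|^{-1}|A||B|$ are correct and match the paper. The gap is in your treatment of the error $\mathcal E$. As you have written it, you apply Cauchy--Schwarz in $\underline y$ (or jointly in $(t,\underline y)$) and throw away the phase $e(-t\,\underline y\cdot\underline y)$, reducing to the incidence count $\sum_{t\ne 0}\sum_{\underline y}\underline B(\underline y)|\widehat{\underline A}(t\underline y)|^2$. But expanding this and summing in $t$ gives $|F|\cdot\#\{(a,a',\underline y)\in\underline A^2\times\underline B:(a-a')\cdot\underline y=0\}-|A|^2|B|$, a hyperplane-incidence quantity with no usable upper bound of size $|F|^{d-2}|A||B|$: the off-diagonal pairs $a\ne a'$ can each see up to $|B|$ points $\underline y$, and neither Gauss nor Kloosterman sums appear anywhere in this computation (the only character sum you meet is $\sum_{t\ne 0}e(tc)$). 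Keeping the quadratic phase instead and completing the square in $\underline y$ produces Gauss sums of the right size on the diagonal $t=t'$ but leaves an off-diagonal term of order $|F|^{(d-1)/4}|A||B|^{1/2}$, which is too large whenever $|A|\gg |F|^{(d-3)/2}$. So neither branch of your first route closes.

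What the paper does --- and this is the idea you are circling but not using --- is to perform the inversion $\underline y\mapsto \underline y/(\underline y\cdot\underline y)$ \emph{before} any character detection or Cauchy--Schwarz. This is exactly where $\underline y\notin S_0$ is spent (well-definedness) and where Lemma~\ref{lem:Fnorms} is used (injectivity), and it converts the awkward condition $\underline x\cdot\underline y=\underline y\cdot\underline y$ into the bilinear, $\underline y$-linear condition $a\cdot b=1$ with $b\in\underline B'$. From here one detects $a\cdot b=1$ with characters, applies Cauchy--Schwarz in $b$, completes the $b$-sum to $F^{d-1}$, and uses orthogonality in $b$ to force $sa=s'a'$; a further change of variables $u=s'$, $v=s/s'$ shows the remaining off-diagonal contribution is actually $\le 0$. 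No Gauss or Kloosterman estimate is needed; one gets $|R|^2\le |F|^{d-2}|A||B|$ on the nose. Your ``alternative route'' via a Galilean transformation is not an alternative here: that move was already used upstream to reduce $M_2$ to the present lemma, and applying it again does not linearize the remaining quadratic dependence on $\underline y$.
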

\begin{proof}
Observing that $x-y\in P$ for $x\in A\subseteq P$ and $y\in B\subseteq P\setminus (S_0\times \{0\})$ if and only if $ \underline{x} \cdot \underline{y} = \underline{y} \cdot \underline{y} \ne 0$. It follows that
$$ \sum_{\substack{x\in A, y\in B\\ x-y\in P}}1 = \sum_{\substack{\underline{x} \in \underline{A}, \underline{y} \in \underline{B}\\\underline{x} \cdot \underline{y} = \underline{y}\cdot \underline{y} \ne 0 }}1=\sum_{\substack{\underline{x} \in \underline{A}, \underline{y}\in \underline{B}\\\underline{x} \cdot \frac{\underline{y}}{\underline{y}\cdot \underline{y}} = 1}}1.$$
Using Lemma \ref{lem:Fnorms}, we can replace $\underline{B}$ with $\underline{B}^\prime:=\{\frac{\underline{y}}{\underline{y}\cdot \underline{y}} : y \in B\}$ and  we can write
$$ \sum_{\substack{x\in A, y\in B\\ x-y\in P}}1 
=\sum_{\substack{\underline{x} \in \underline{A} , \underline{y} \in \underline{B} \\ \underline{x} \cdot \frac{\underline{y}}{\underline{y}\cdot \underline{y}} = 1}}1 
=  \sum_{\substack{a\in \underline{A}, b\in \underline{B}^\prime \\a \cdot b= 1}}1.$$
We will estimate this quantity using Fourier analysis. Note that $|B|=|\underline{B}|=|\underline{B}^\prime|$. By the orthogonality relation of the nontrivial additive character $e(\cdot)$ of $F$,  it follows that
\begin{align*}\sum_{\substack{a\in \underline{A}, b\in \underline{B}^\prime \\a \cdot b= 1}}1 
&= \sum_{a\in \underline{A}, b\in \underline{B}^\prime } |F|^{-1} \sum_{s\in F} e(s(a\cdot b-1))\\
&=|F|^{-1}|A||B| + |F|^{-1} \sum_{\substack{a\in \underline{A}, b\in \underline{B}^\prime\\s\ne 0}} e(s(a\cdot b -1))\\
&:= |F|^{-1}|A||B| + R. \end{align*}
Estimating $R$ by applying the Cauchy-Schwarz inequality in the variable $b$, and
completing the sum over $b \in \underline{B}^{\prime}$ to all of $F^{d-1}$, we see that
\begin{align*}
|R|^2 &\le |F|^{-2} |B| \sum_{b\in F^{d-1}} \left| \sum_{a\in \underline{A}, s\ne 0} e(s(a\cdot b-1))\right|^2\\
& =|F|^{-2} |B| \sum_{a, a^\prime\in \underline{A},\, s,s^\prime \ne 0} 
e(s^\prime-s) \left(\sum_{b\in F^{d-1}} e( (sa-s^\prime a^\prime)\cdot b)\right).
\end{align*}
By the orthogonality relation of $e(\cdot)$, we have
\begin{align*}
|R|^2 &\le |F|^{d-3} \,|B| \sum_{\substack{a, a^\prime\in \underline{A}, \,s, s^\prime \ne 0\\sa=s^\prime a^\prime}} e(s^\prime-s)\\
&= |F|^{d-3}\, |B| \sum_{a\in \underline{A},\, s\ne 0} 1 +|F|^{d-3} \,|B|
\sum_{\substack{a, a^\prime\in \underline{A}, \,s, s^\prime \ne 0\\s\ne s^\prime,\, sa=s^\prime a^\prime}} e\left(s^\prime \left(1- \frac{s}{s^\prime}\right)\right)\\
&\le |F|^{d-2}|A||B| +|F|^{d-3} \,|B|
\sum_{\substack{a, a^\prime\in \underline{A}, \,s, s^\prime \ne 0\\s\ne s^\prime,\, sa=s^\prime a^\prime}} e\left(s^\prime \left(1- \frac{s}{s^\prime}\right)\right).
\end{align*}
Letting $u=s^\prime$ and $v=\frac{s}{s^\prime}$, it follows that
\begin{align*} |R|^2 &\le |F|^{d-2}|A||B| +|F|^{d-3} \,|B|
\sum_{\substack{a, a^\prime\in F^{d-1}, \,u\ne 0, v \ne 0,1\\va= a^\prime}} \underline{A}(a) \,\underline{A}(a^\prime)\, e(u(1-v))\\
&= |F|^{d-2}|A||B| -|F|^{d-3}\,|B| \sum_{a\in F^{d-1}, \,v\ne 0,1} \underline{A}(a)\,\underline{A}(va) \le |F|^{d-2}|A||B|.
\end{align*}
Thus, $|R|\le |F|^{\frac{d-2}{2}} |A|^{\frac{1}{2}}  |B|^{\frac{1}{2}}$ which completes the proof.
\end{proof}

 \bibliographystyle{amsplain}

\end{document}